\newtheorem{theorem}{Theorem}[section]
\newtheorem{lemma}[theorem]{Lemma}
\newtheorem{prop} [theorem]{Proposition}
\newtheorem{cor}[theorem]{Corollary}
\def\Z{\mathbb{Z}}
\DeclareRobustCommand{\stirling}{\genfrac\{\}{0pt}{}}
\DeclareMathAlphabet\mathbb{U}{msb}{m}{n}
\def\xyma{\xymatrix@M.7em}
\def\square{\vbox{
      \hrule height 0.4pt
      \hbox{\vrule width 0.4pt height 5.5pt \kern 5.5pt \vrule width 0.4pt}
      \hrule height 0.4pt}}
\def\ch\mathrm{c h}
\def\ab{\mathrm{a b}}
\def\im{\mathrm{I m}}
\def\coalg{\mathrm{co a l g}}
\def\ad{\mathrm{ad}}
\def\ScriptB{\mathrm{\EuScript{B}}}
\long\def\symbolfootnote[#1]#2{\begingroup%
\def\thefootnote{\fnsymbol{footnote}}\footnote[#1]{#2}\endgroup}
\newcommand{\calH}{\ensuremath{\mathcal{H}}}
\let\la=\langle
\let\ra=\rangle
\numberwithin{equation}{section}
\begin{document}
\title{A combinatorial approach to the exponents of Moore spaces}

\author{Frederick R. Cohen}
\address{Department of Mathematics, University of Rochester, Rochester, NY 14625, USA}
\email{cohf@math.rochester.edu}

\author{Roman Mikhailov}
\address{Chebyshev Laboratory, St. Petersburg State University, 14th Line, 29b,
Saint Petersburg, 199178 Russia and St. Petersburg Department of
Steklov Mathematical Institute} \email{rmikhailov@mail.ru}
\author{Jie Wu }
\address{Department of Mathematics, National University of Singapore, 10 Lower Kent Ridge Road, Singapore 119076} \email{matwuj@nus.edu.sg}
\urladdr{www.math.nus.edu.sg/\~{}matwujie}

\thanks{The main result (Theorem~\ref{theorem1.1})  is supported by Russian Scientific Foundation, grant N 14-21-00035. The last author is also partially supported by the Singapore Ministry
of Education research grant (AcRF Tier 1 WBS No. R-146-000-190-112)
and a grant (No. 11329101) of NSFC of China.}

\begin{abstract}
In this article, we give a combinatorial approach to the exponents of the Moore spaces. Our result states that the projection of the $p^{r+1}$-th power map of the loop space of the $(2n+1)$-dimensional mod $p^r$ Moore space to its atomic piece containing the bottom cell $T^{2n+1}\{p^r\}$ is null homotopic for $n>1$, $p>3$ and $r>1$. This result strengthens the classical result that $\Omega T^{2n+1}\{p^r\}$ has an exponent $p^{r+1}$.
\end{abstract}

\maketitle

\section{Introduction}

The purpose of this article is to give a combinatorial approach to the exponents of Moore spaces.  The exponent problem has been studied by various people with fruitful results~\cite{Barratt, CMN,  CMN3, James,  Neisendorfer, Neisendorfer3, Selick, Theriault, Toda} by using traditional methods. Our approach to the exponent of Moore spaces will be given by studying the combinatorics of the Cohen groups introduced in~\cite{Cohen2} together with collecting the minimal geometric information such as the classical Cohen-Moore-Neisendorfer decompositions and basic properties on the mod $p^r$ homotopy groups of mod $p^r$ Moore spaces~\cite{CMN,CMN2,CMN3}.

Let us begin with a brief review on the Cohen groups. Let $X$ be a pointed space. Recall that the James construction $J(X)$ is the free monoid generated by $X$ subject to the single relation the basepoint $\ast\sim 1$, with weak topology. The James filtration $J_n(X)$ is given by the word length filtration of $J(X)$. Thus $J_n(X)$ is a quotient space of the $n$-fold Cartesian product $X^{\times n}$ as the coequalizer of the coordinate inclusions $d^i\colon X^{n-1}\to X^n$, $(y_1,\ldots,y_{n-1})\mapsto (y_1,\ldots, y_{i-1},\ast,y_i,\ldots,x_{y-1})$ for $1\leq i\leq n$. An important property of the James construction is that $J(X)$ is weakly homotopy equivalent to $\Omega\Sigma X$ if $X$ is path-connected~\cite{James1}. By using the James construction, one can get a combinatorial approach to the self-maps of loop suspensions in the following way. Let $F_n=\la x_1,\ldots,x_n\ra$ be the free group of rank $n$ with a fixed choice of basis $x_1,\ldots,x_n$. Observe that the multiplication of $\Omega\Sigma X$ induces a group structure on $[X^{\times n}, \Omega\Sigma X]$. Consider the naive representation
$$
\tilde e_X\colon F_n\longrightarrow [X^{\times n}, \Omega \Sigma X]
$$
as a group homomorphism, which sends $x_i$ to the homotopy class of the composite
$$
X^{\times n}\rTo^{\pi_i} X\rInto^E\Omega\Sigma X,
$$
where $\pi_i$ is the $i$-th coordinate projection and $E$ is the canonical inclusion. It was discovered in~\cite{Cohen2} that for any co-$H$-space $X$,
$$
\tilde e_X([[x_{i_1},x_{i_2}],\ldots,x_{i_t}])=1
$$
if $i_p=i_q$ for some $1\leq p<q\leq t$. The group $K_n=K_n(x_1,\ldots,x_n)$ was then introduced as the quotient group of $F_n$ subject to the above relations, with the property that $\tilde e_X$ induces a representation
$$
e_X\colon K_n\longrightarrow [X^{\times n}, \Omega\Sigma X]
$$
for any co-$H$-space $X$. In order to obtain self-maps of $\Omega\Sigma X$, the suspension splitting theorem of the James construction gives a good property that the quotient map $q_n\colon X^{\times n}\to J_n(X)$ induces a group monomorphism $q_n^*\colon [J_n(X),\Omega\Sigma X]\to [X^{\times n},\Omega\Sigma X]$ and its image is given by the equalizer of the group homomorphisms $d^{i*}\colon [X^{\times n-1},\Omega\Sigma X]\to [X^{\times n}, \Omega\Sigma X]$ for $1\leq i\leq n$. Moreover
$$
[\Omega\Sigma X, \Omega\Sigma X]\cong [J(X),\Omega\Sigma X]=\lim_n[J_n(X),\Omega\Sigma X]
$$
is the inverse limit for any path-connected space $X$. The interpretation of $d^{i*}$ in the Cohen group $K_n$ is the projection homomorphism
$$
d_i\colon K_n\longrightarrow K_{n-1}
$$
with $d_i(x_j)=x_j$ for $j<i$, $d_i(x_i)=1$ and $d_i(x_j)=x_{j-1}$ for $j>i$. Let $H_n$ be the subgroup of $K_n$ given as the equalizer of the group homomorphisms $d_i$ for $1\leq i\leq n$. The restriction of $e_X$ on the subgroup $H_n$ gives a representation
$$
e_X\colon H_n\longrightarrow [J_n(X),\Omega\Sigma X]
$$
for any co-$H$-space $X$. With taking inverse limit, let $H=\lim\limits_n H_n$, one get a representation
$$
e_X\colon H\longrightarrow[J(X),\Omega\Sigma X]\cong [\Omega\Sigma X,\Omega\Sigma X]
$$
for any path-connected co-$H$-space $X$.

We should point out that the group $K_n$ is isomorphic to Milnor's reduced free group introduced in his fundamental work on homotopy link theory~\cite{Milnor}. A recent application of the group $K_n$ in $4$-manifolds is given in~\cite{Freedman}. The importance of the Cohen groups $K_n$, $H_n$ and $H$ in homotopy theory is that $H$ is a subgroup of the group of self natural transformations of the functor $\Omega\Sigma$ on path-connected co-$H$-spaces with its algebraic version through the Hurewicz homomorphism given exactly by the group of self natural transformations of the tensor algebra functor free abelian groups to coalgebras~\cite{Selick-Wu,Selick-Wu2,Wu}. In particular, some fundamental objects in unstable homotopy theory, that is the Hopf invariants, the Whitehead product, the power maps and the loop of degree maps, are under controlled by the group $H$.

Suppose that the inclusion map $E\colon X\to \Omega\Sigma X$ has a finite order of $p^r$ in the group $[X,\Omega\Sigma X]$. Then the representation $e_X\colon K_n\to [X^{\times n},\Omega \Sigma X]$ factors through the group $K_n^{\Z/p^r}=K_n^{\Z/p^r}(x_1,\ldots,x_n)$, which is the quotient group of $K_n$ by requiring $x_i^{p^r}=1$ for $1\leq i\leq n$.  Similar to the integral version, the equalizer of the operations $d_i$ on $K_n^{\Z/p^r}$ gives the subgroup $H_n^{\Z/p^r}$. The Cohen group $K_n^{\Z/p^r}$ services for the exponent problem, which is under exploration in this article. Observe that the particular element $\alpha_n=x_1x_2\cdots x_n\in H_n^{\Z/p^r}\leq K_n^{\Z/p^r}$ has the geometric interpretation as the homotopy class of the inclusion map $J_n(X)\to \Omega\Sigma X$. Suppose that $\alpha_n^{p^t}=1$ in $K_n^{\Z/p^r}$. Then geometrically it means that the inclusion map $J_n(X)\to\Omega\Sigma X$ has an order bounded by $p^t$ in the group $[J_n(X),\Omega\Sigma X]$. In particular the homotopy groups $\pi_*(\Omega\Sigma X)=\pi_{*+1}(\Sigma X)$ has an exponent bounded by $p^t$ up to the range controlled by $J_n(X)$, namely below $(n+1)$ times the connectivity of $X$. When $n=1$, $\alpha_1^{p^r}=1$, which is the starting point. When $n$ increases, the exponent of $\alpha_n$ also increases. For understanding the growth of $\alpha_n$, it important and fundamental to understand the element $\alpha_n^{p^r}$ and the difference between $\alpha_{n+1}^{p^{r+1}}$ and $\alpha_n^{p^{r+1}}$. By using techniques in group theory, Lemma~\ref{lemma2.6} gives a description of the element $\alpha_n^{p^r}$ and Proposition~\ref{proposition2.7} gives a description of the difference between $\alpha_{n+1}^{p^{r+1}}$ and $\alpha_n^{p^{r+1}}$. Here, we should make a comment that the Stirling number appears naturally in this topic by Lemma~\ref{lemma2.2}.

It should be pointed out that, for any connected space $X$ with a nontrivial reduced homology with coefficients in $p$-local integers, any power $p^t\colon \Omega\Sigma X\to \Omega\Sigma X$ is essential by~\cite[Theorem 3.10]{CMN3}. This property seems to discourage the study on the exponents of the single loop spaces. However, with taking the observation that $\Omega\Sigma X$ has various decompositions, one can ask the following question. Let $T$ be the atomic retract of $\Omega\Sigma X$ containing the bottom cell. Is it possible that there is a choice of the projection map $\pi\colon \Omega\Sigma X\to T$ such that the composite
$$
\Omega\Sigma X\rTo^{p^t}\Omega\Sigma X\rTo^{\pi} T
$$
is null homotopic for some $t$?

By using combinatorial approach, we give the affirmed answer to the above question for Moore spaces. Our result is as follows. Recall ~\cite[Corollary 1.9]{CMN3} that there is a homotopy decomposition
$$
\Omega P^{2n+1}(p^r)\simeq T^{2n+1}\{p^r\}\times \Omega P(n,p^r)
$$
for $p>2$ and $n\geq2$, where $P^{m}(p^r)=S^{m-1}\cup_{p^r}e^m$, the $m$-dimensional mod $p^r$ Moore space, $P(n,p^r)$ is a wedge of mod $p^r$ Moore spaces, and $T^{2n+1}\{p^r\}$ is the atomic retract of $\Omega P^{2n+1}(p^r)$.

\begin{theorem}\label{theorem1.1}
There is a choice of the projection $\partial\colon \Omega P^{2n+1}(p^r)\to T^{2n+1}\{p^r\}$ such that composite
$$
\Omega P^{2n+1}(p^r)\rTo^{p^{r+1}}\Omega P^{2n+1}(p^r)\rTo^{\partial} T^{2n+1}\{p^r\}
$$
is null homotopic for $p>3$, $n>1$ and $r>1$.
\end{theorem}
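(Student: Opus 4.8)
The plan is to translate the assertion into a computation inside the Cohen groups $K^{\Z/p^r}_k$ and then import only the James splitting, the Cohen--Moore--Neisendorfer decomposition, and the classical exponent of $\Omega T^{2n+1}\{p^r\}$.

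Write $X=P^{2n}(p^r)$, so that $P^{2n+1}(p^r)=\Sigma X$ and $\Omega P^{2n+1}(p^r)=\Omega\Sigma X$. For $p>3$ the $p^r$-th power of the identity of $P^{2n}(p^r)$ is null-homotopic (this fails for $p=3$, which forces the hypothesis $p>3$), so the inclusion $E\colon X\to\Omega\Sigma X$ has order dividing $p^r$ and $e_X$ factors through the tower $\{K^{\Z/p^r}_k\}$. Writing $\alpha_k=x_1\cdots x_k\in H^{\Z/p^r}_k$ and $\alpha=\lim_k\alpha_k\in H^{\Z/p^r}$, the class $e_X(\alpha)$ is the identity of $\Omega\Sigma X$, hence $e_X(\alpha^{p^{r+1}})$ is exactly the $p^{r+1}$-st power map. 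I would take $\partial$ to be the projection onto the minimal functorial retract of $\Omega\Sigma(-)$ in the natural-transformation framework of~\cite{Selick-Wu,Selick-Wu2,Wu}, which for $X=P^{2n}(p^r)$ is $T^{2n+1}\{p^r\}$. Two bookkeeping points follow from this choice: first, the restriction of $\partial\circ e_X(\alpha^{p^{r+1}}_{k+1})$ to $J_k(X)$ equals $\partial\circ e_X(\alpha^{p^{r+1}}_k)$, so an induction on the James filtration is available; and second, since $[-,T^{2n+1}\{p^r\}]$ is a retract of $[-,\Omega\Sigma X]$ and the latter sends $J(X)$ to the inverse limit $\lim_k[J_k(X),\Omega\Sigma X]$, the relevant $\lim^1$ is a retract of $0$, so it suffices to kill each finite stage.

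Thus I would prove by induction on $k$ that $\partial\circ e_X(\alpha^{p^{r+1}}_k)\colon J_k(X)\to T^{2n+1}\{p^r\}$ is null. The case $k=1$ is trivial, since $\alpha_1^{p^{r+1}}=x_1^{p^{r+1}}=1$ in $K^{\Z/p^r}_1$. For the inductive step, the hypothesis makes $\partial\circ e_X(\alpha^{p^{r+1}}_{k+1})$ null on $J_k(X)$, so it factors up to homotopy through the cofibre $J_{k+1}(X)/J_k(X)=X^{\wedge(k+1)}=P^{2n}(p^r)^{\wedge(k+1)}$. Using Proposition~\ref{proposition2.7} to expand $(\alpha_kx_{k+1})^{p^{r+1}}$ in terms of $\alpha_k^{p^{r+1}}$, together with Lemma~\ref{lemma2.6} for $\alpha_k^{p^r}$ and Lemma~\ref{lemma2.2} to put the resulting binomial coefficients $\binom{p^{r+1}}{j}$ into Stirling form, this coextension is $e_X$ of an explicit product of iterated commutators of $\alpha_k$ and $x_{k+1}$, each commutator of weight $j$ collected with a coefficient that is an integral combination of $\binom{p^{r+1}}{2},\dots,\binom{p^{r+1}}{j}$.

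The crux, and the step I expect to be the main obstacle, is to show this map $P^{2n}(p^r)^{\wedge(k+1)}\to T^{2n+1}\{p^r\}$ is null. The target group is tractable: since $p$ is odd, $P^{2n}(p^r)^{\wedge(k+1)}=\Sigma W$ is a suspension, so $[P^{2n}(p^r)^{\wedge(k+1)},T^{2n+1}\{p^r\}]=[W,\Omega T^{2n+1}\{p^r\}]$, which has exponent $p^{r+1}$ by the classical exponent theorem for $\Omega T^{2n+1}\{p^r\}$ — the very statement being strengthened. Hence it is enough to show the coextension is $p^{r+1}$-divisible in that group. For commutators of weight $j<p$ this is immediate, since then every $\binom{p^{r+1}}{i}$ with $2\le i\le j$ is divisible by $p^{r+1}$; the difficulty is the commutators of weight $j\ge p$, starting with weight $p$, where the collected coefficient is a priori divisible only by $p^{r}$ because of the term $\binom{p^{r+1}}{p}$. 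These must be disposed of by a finer analysis: using $\alpha_k^{p^{r+1}}=(\alpha_k^{p^r})^p$ and the control of $\alpha_k^{p^r}$ afforded by Lemma~\ref{lemma2.6}, the nilpotency of $K^{\Z/p^r}_{k+1}$ (of class $\le k+1$) to make the collection process terminate, and the fine structure of the mod $p$ homology of $T^{2n+1}\{p^r\}$ as a restricted enveloping algebra (so that $\partial$ already annihilates the offending weight-$p$ Lie classes) to recover the missing factor of $p$. Making all the $p$-adic valuations of these Stirling and binomial coefficients (Lemma~\ref{lemma2.2}) line up is the technical heart of the proof, and is precisely where both $p>3$ and $r>1$ are used.
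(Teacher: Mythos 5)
Your setup (Cohen groups, reduction along the James filtration, coextension of the difference $\alpha_{k+1}^{p^{r+1}}\alpha_k^{-p^{r+1}}$ over $J_{k+1}(X)/J_k(X)=X^{\wedge(k+1)}$) is sound and matches the paper's framework, and you have correctly located the obstruction in the commutators of $p$-power weight, where $\binom{p^{r+1}}{p^t}$ has $p$-adic valuation only $r+1-t$. But the step you call the technical heart is where the proof actually lives, and the mechanism you propose for it does not work. You want to recover the missing powers of $p$ from ``the fine structure of the mod $p$ homology of $T^{2n+1}\{p^r\}$\dots so that $\partial$ already annihilates the offending weight-$p$ Lie classes.'' That premise is false: $\tau_1=\ad^{p-1}(v)(u)$ is precisely a weight-$p$ Lie element, and it survives to a generator of $H_*(T^{2n+1}\{p^r\})$ (likewise $\tau_t,\sigma_t$ in weights $p^t$ and $p^t+2$, which is exactly why the weights that are powers of $p$ are excluded from $\EuScript B_{n}$). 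So the weight-$p^t$ commutators are not killed by $\partial$ in homology, and their coefficients are not divisible by $p^{r+1}$, so your divisibility argument in $[W,\Omega T^{2n+1}\{p^r\}]$ (which in any case imports the classical exponent theorem as a black box) cannot close. Moreover, even for the classes that \emph{do} die in homology, a homological computation never shows a map is null-homotopic; some genuinely geometric input is required.

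The paper supplies that input differently, and you never touch it: $\partial$ is chosen to be the connecting map of the fibration $T^{2n+1}\{p^r\}\to P(n,p^r)\to P^{2n+1}(p^r)$, so that $\partial\circ\Omega\tilde\pi\simeq\ast$, and the whole proof consists in showing that the $p^{r+1}$-power map \emph{lifts through} $\Omega\tilde\pi\colon\Omega P(n,p^r)\to\Omega P^{2n+1}(p^r)$ rather than in showing it is divisible. Concretely: Corollary~\ref{corollary2.8} places $\alpha_k^{p^{r+1}}$ in $\gamma_2\gamma_2\gamma_2(K)\,[\gamma_2(K)^p,\gamma_2\gamma_2(K)]\,(\gamma_2\gamma_2(K))^p\leq\gamma_2\gamma_2(K_k(P))$ (the dangerous $p$-power-weight commutators being absorbed there by the Stirling-number multiplicities of Lemmas~\ref{lemma2.2}--\ref{divisible} together with the relation $[x_i,g]^{p^r}=1$, which needs $r>1$); and Lemma~\ref{lemma4.2} shows geometrically, via $[[L(\mu,\nu),L(\mu,\nu)],[L(\mu,\nu),L(\mu,\nu)]]\subseteq\tilde L(P(n,p^r))$ and the mod $p^r$ Samelson product structure, that $\gamma_2\gamma_2(K_k(P))\leq\ScriptB_k(P)=K_k(P)\cap\im\Omega\tilde\pi_*$. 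Passing to the inverse limit then produces $f\colon J(P^{2n}(p^r))\to\Omega P(n,p^r)$ with $\Omega\tilde\pi\circ f\simeq p^{r+1}$, and composing with $\partial$ finishes. To repair your argument you would need to replace the homological claim about $T^{2n+1}\{p^r\}$ with some version of this lifting through the complementary factor $\Omega P(n,p^r)$; without it the weight-$p^t$ terms are a genuine, unaddressed obstruction.
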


This theorem strengthens the classical result ~\cite{Neisendorfer} that $\Omega T^{2n+1}\{p^r\}$ has exponent $p^{r+1}$ in the sense that $T^{2n+1}\{p^r\}$ already has exponent $p^{r+1}$ in the above sense.

The article is organized as follows. In section~\ref{section2}, we explore the combinatorics of the Cohen groups. We give some remarks for potential applications for general spaces in section~\ref{section3}. In section~\ref{section4}, we give the applications to the Moore spaces. Theorem~\ref{theorem1.1} is Theorem~\ref{theorem4.1}. In Section~\ref{section5}, we give the applications to the Anick spaces.

\section{Combinatorics of the Cohen groups}\label{section2}
In this section, $p$ is an odd prime and $r\geq 1$.  For elements
$x,y, g_1,\dots, g_k$ of a group, we will use the standard
commutator and left-normalized notation:
$$[x,y]:=x^{-1}y^{-1}xy,\ \ x^y:=y^{-1}xy,\ \ [g_1,\dots, g_k]:=[[g_1,\dots, g_{k-1}],g_k].$$
For $i\geq 1$, we will use the following notation for the
left-Engel brackets
$$[x,_1y]:=[x,y],\ [x,_iy]=[[x,_{i-1}y],y].$$

For $n\geq 1$, the Cohen group $K_n^{\mathbb Z/p^r}=K_n^{\mathbb
Z/p^r}(x_1,\dots, x_n)$ is the quotient of a free group
$F(x_1,\dots, x_n)$ of rank $n$ by all left-normalized commutators
$$
[x_{i_1},\dots, x_{i_k}],\ \text{such that}\ i_s=i_t\ \text{for
some}\ 1\leq s,t\leq n,\ s\neq t
$$
together with $p^r$th powers of generators $x_i^{p^r},\ i=1,\dots,
n$. The group $K_n^{\mathbb Z/p^r}$ is nilpotent of class $n$.

In this paper, we will consider also the following subgroup of
$K_n^{\mathbb Z/p^r}$. Let $\EuScript B_n$ be the subgroup of
$K_n^{\mathbb Z/p}$ generated by all brackets
$$
[x_{i_1},\dots, x_{i_k}],\ k\neq p^t,\ t\geq 0.
$$
For any configuration of brackets $[[...],[[...]...]...]$, in a
commutator of length $k$ whose entrances are generators
$\{x_1,\dots, x_n\}$ only, can be written as a product of
left-normalized commutators of length $k$ with generators as
entrances. This follows from the definition of $K_n^{\mathbb
Z/p^r}$ and the Hall-Witt identity. Therefore, any commutator of
length $\neq p^t, t\geq 0$ whose entrances are generators, is in
$\EuScript B_n$. Obviously, $\EuScript B_n$ is not normal in
$K_n^{\mathbb Z/p}$.

The commutator calculus in groups $K_n^{\mathbb Z/p^r}$ are much
simpler than in free nilpotent groups.  We will need the following
standard relations.

\begin{lemma}\label{firstlemma}
Let $x$ be an element from the generating set $\{x_1,\dots, x_n\}$
and $g$ any element of $K_n^{\mathbb Z/p^r}$. Then, for $k\geq 1$,
\begin{align}
& [x,g^k]=\prod_{i=1}^k[x,_ig]^{\binom{k}{i}};\label{q1}\\
&
(gx)^k=g^kx^k\prod_{i=1}^{k-1}[x,_ig]^{\binom{k}{i+1}}.\label{q2}
\end{align}
\end{lemma}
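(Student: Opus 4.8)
The plan is to prove both identities by induction on $k$, once the one special feature of $K_n^{\mathbb Z/p^r}$ they rest on is in place: that a generator commutes with all of its conjugates. Throughout I write $c_i:=[x,_ig]$, so that $c_1=[x,g]$ and $c_{i+1}=[c_i,g]$.

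First I would establish that the normal closure $M$ of $x$ in $K_n^{\mathbb Z/p^r}$ is abelian. Since $c_1=[x,g]=x^{-1}x^{g}$ and $c_{i+1}=c_i^{-1}c_i^{\,g}$, induction gives $x\in M$ and $c_i\in M$ for all $i$, so once $M$ is known to be abelian, $x$ and all the $c_i$ commute pairwise. Now $M$ is generated by the conjugates $x^{u}$, and $[x^{u},x^{v}]=[x,x^{vu^{-1}}]^{u}$, so it suffices to prove $[x,x^{w}]=1$ for every $w$. Using $x^{w}=x[x,w]$, the relation $[x,x]=1$ and the identity $[a,bc]=[a,c][a,b]^{c}$,
\[ [x,x^{w}]=[x,x[x,w]]=[x,[x,w]]. \]
Representing $w$ by a word in the generators and expanding $[x,w]$ repeatedly by $[a,st]=[a,t][a,s]^{t}$ writes $[x,w]$ as a product of conjugates of left-normalized commutators in the generators, each beginning with the letter $x$; commuting $x$ with this product and invoking the Hall--Witt reduction recalled above expresses $[x,[x,w]]$ as a product of left-normalized commutators in the generators, each of which mentions the letter $x$ at least twice and is therefore trivial in $K_n^{\mathbb Z/p^r}$ by the defining relations. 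Hence $[x,x^{w}]=1$ and $M$ is abelian.

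I would then derive \eqref{q1} by induction on $k$, the case $k=1$ being the definition $[x,g]=c_1$. Writing $[x,g^{k+1}]=[x,g][x,g^{k}]^{g}=c_1[x,g^{k}]^{g}$ and using $c_i^{\,g}=c_ic_{i+1}$ together with the commutativity in $M$, the inductive hypothesis gives
\[ [x,g^{k}]^{g}=\prod_{i=1}^{k}(c_ic_{i+1})^{\binom ki}=\prod_{i=1}^{k}c_i^{\binom ki}\prod_{i=1}^{k}c_{i+1}^{\binom ki}, \]
and multiplying by $c_1$ and collecting exponents in $M$ via Pascal's rule $\binom ki+\binom k{i-1}=\binom{k+1}i$ yields $[x,g^{k+1}]=\prod_{i=1}^{k+1}c_i^{\binom{k+1}i}$. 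Likewise \eqref{q2} follows by induction on $k$ starting from $k=1$: granting $(gx)^{k}=g^{k}x^{k}\prod_{i=1}^{k-1}c_i^{\binom k{i+1}}$, one computes $(gx)^{k+1}=(gx)^{k}gx$ by carrying the new $g$ to the left, using $x^{k}g=g(xc_1)^{k}=gx^{k}c_1^{k}$ (since $x$ and $c_1$ commute) and $c_ig=gc_ic_{i+1}$, then moving the trailing $x$ past the commutator factors, which are central in $M$, and collecting exponents by Pascal's rule to obtain $(gx)^{k+1}=g^{k+1}x^{k+1}\prod_{i=1}^{k}c_i^{\binom{k+1}{i+1}}$.

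The one place where the argument is not purely formal is the first step, and I expect the care there to go into the assertion that an iterated commutator in the generators which repeats some generator vanishes in $K_n^{\mathbb Z/p^r}$ even when it is not presented in left-normalized form --- but this is exactly the Hall--Witt reduction quoted in the discussion of $\EuScript B_n$, so it costs nothing here. Everything else is bookkeeping with binomial coefficients, and the generality of $g$ is harmless because every commutator arising in the two inductions lies in the single abelian subgroup $M$.
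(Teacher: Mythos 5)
Your proof is correct and follows essentially the same route as the paper's: both identities are proved by induction on $k$, driven by Pascal's rule and the fact that a generator commutes with all of its conjugates (equivalently, that all the $[x,{}_ig]$ lie in a single abelian subgroup). The only difference is that you justify this commutativity via the Hall--Witt reduction of commutators with a repeated generator, whereas the paper simply invokes it as a known property of $K_n^{\mathbb Z/p^r}$.
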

\begin{proof}
First we prove (\ref{q1}). For $k=1$, this is obvious. Suppose
that the formula is proved for a given $k$. Then, using the
property of the group, that for all elements $h_1,h_2$, $[x,h_1]$
and $[x,h_2]$ commute, we get
\begin{multline*}
[x,g^{k+1}]=[x,g][x,g^k]^{g}=[x,g][x,g^k][x,g^k,g]=\\
[x,g]^{k+1}[x,_{k+1}g]\prod_{i=2}^k[x,_ig]^{\binom{k}{i}+\binom{k}{i-1}}=\prod_{i=1}^{k+1}[x,_ig]^{\binom{k+1}{i}}.
\end{multline*}
The needed relation is proved.

To prove (\ref{q2}), we also use the induction on $k$. For $k=1$
it is obvious. Suppose that (\ref{q2}) is proved for a given $k$.
Then, using the relation $[x^k,g]=[x,g]^k$, we obtain
\begin{multline*}
(gx)^{k+1}=(gx)^k(gx)=g^kx^k(\prod_{i=1}^{k-1}[x,_ig])gx=\\
g^{k+1}x^{k+1}[x^k,g](\prod_{i=1}^{k-1}[x,_ig]^{\binom{k}{i+1}})\prod_{i=1}^{k-1}[x,_{i+1}g]^{\binom{k}{i+1}}=\\
g^{k+1}x^{k+1}[x,g]^{\binom{k+1}{2}}(\prod_{i=2}^{k-1}[x,_ig]^{\binom{k}{i+1}+\binom{k}{i}})[x,_kg]=g^{k+1}x^{k+1}\prod_{i=1}^k[x,_ig]^{\binom{k+1}{i+1}}.
\end{multline*}
The inductive step is done.
\end{proof}

For the convenience, we will work now in the group
$K_{n+1}^{\mathbb Z/p^r}=K_{n+1}^{\mathbb Z/p^r}(x_1,\dots,
x_{n+1}).$ Observe that, for $l>n$,
$$
[x_{n+1},_l(x_1\dots x_n)]=1.
$$
This follows from the simple observation that $K_{n+1}^{\mathbb
Z/p^r}$ is nilpotent of class $n+1$. To describe the commutator
$[x_{n+1},_l(x_1\dots x_n)]$ for $n\geq l$, we will need some
special sets of permutations.

For a given $1\leq l\leq n$, consider the set of permutations of
$\{1,\dots,n\}$ \begin{multline*} \Sigma_l^n=\{(i_1,\dots,
i_{k_1},
i_{k_1+1},\dots, i_{k_2},\dots, i_{k_{l-1}+1},\dots, i_{k_l})\ |\\
i_{k_i+1}<\dots<i_{k_{i+1}},\  k_0=0,\ i=1,\dots, l-1\}
\end{multline*}
That is, $\Sigma_l^n$ consists of permutations on $n$ letters such
that they can be divided into $l$ monotonic blocks. Some
permutations can be divided into $l$ monotonic blocks in different
ways, for a permutation $\sigma$, the number of such divisions we
denote by $d_l(\sigma).$ For example, here is the list of
permutations from $\Sigma_2^3$ with values of $d_2$:
\begin{align*}
& \text{permutation}\ & d_2\\
& (1,2,3) & 2\\
& (2,1,3) & 1\\
& (2,3,1) & 1\\
& (3,1,2) & 1\\
& (1,3,2) & 1\\
& (3,2,1) & 0
\end{align*}
The following proposition follows immediately from the definition
of the set $\Sigma_l^n$.
\begin{lemma}\label{lemma2.2}
$\sum_{\sigma\in \Sigma_l^n}d_l(\sigma)=l!\stirling{n}{l}.$ Here
$\stirling{n}{l}$ is the second Stirling number.
\end{lemma}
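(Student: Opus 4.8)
The plan is to exhibit an explicit bijective correspondence between, on the one hand, the set of triples $(\sigma, \text{division})$ where $\sigma \in \Sigma_l^n$ and $\text{division}$ is one of the $d_l(\sigma)$ ways of cutting $\sigma$ into $l$ monotonic blocks, and, on the other hand, the set of pairs $(\text{ordered partition}, \text{surjection})$ counted by $l!\stirling{n}{l}$. Recall that $\stirling{n}{l}$ counts unordered partitions of $\{1,\dots,n\}$ into $l$ nonempty blocks, so $l!\stirling{n}{l}$ counts the same with the $l$ blocks linearly ordered, or equivalently the number of surjections $\{1,\dots,n\}\twoheadrightarrow\{1,\dots,l\}$. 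So the heart of the argument is to see that choosing a permutation together with a valid decomposition into $l$ ordered monotonic blocks is the same data as choosing an ordered set partition of $\{1,\dots,n\}$ into $l$ nonempty blocks.

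First I would make the correspondence precise in the forward direction. Given $\sigma\in\Sigma_l^n$ together with a choice of cut points $0=k_0<k_1<\dots<k_l=n$ realizing the monotonic-block condition, I associate to it the ordered partition whose $j$-th part is the \emph{set} $\{\sigma(k_{j-1}+1),\dots,\sigma(k_j)\}$. Each part is nonempty because $k_{j-1}<k_j$, and the parts are disjoint and cover $\{1,\dots,n\}$ since $\sigma$ is a permutation; this gives an ordered partition into exactly $l$ nonempty blocks. Conversely, given an ordered partition $(B_1,\dots,B_l)$ into nonempty blocks, I recover $\sigma$ by listing the elements of $B_1$ in increasing order, then the elements of $B_2$ in increasing order, and so on, and I take the cut points to be the partial sums $k_j=|B_1|+\dots+|B_j|$. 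The resulting word is a permutation of $\{1,\dots,n\}$ lying in $\Sigma_l^n$ (each block is monotone by construction), and the prescribed cut realizes the block decomposition. These two assignments are mutually inverse essentially by inspection: starting from a partition and running the two maps returns the same partition, and starting from a permutation-with-cut, the induced partition re-sorts each block, but within a block the entries were \emph{already} increasing (that is exactly the condition $i_{k_i+1}<\dots<i_{k_{i+1}}$), so the original permutation and cut are recovered verbatim. Summing the cardinality of each fiber, i.e.\ summing $d_l(\sigma)$ over $\sigma\in\Sigma_l^n$, therefore counts all ordered partitions into $l$ nonempty parts, which is $l!\stirling{n}{l}$.

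I expect the only subtle point — and the place where "follows immediately from the definition" has to be unpacked — is checking that $d_l(\sigma)$ is exactly the number of choices of cut points for a \emph{fixed} $\sigma$, and that distinct choices of cut point for the same $\sigma$ yield distinct ordered partitions (so that we are genuinely counting with multiplicity $d_l(\sigma)$ and not overcounting). Distinctness holds because the cut points are recoverable from the ordered partition as the partial sums of the block sizes, and two different cuts give at least one block of different size at some stage, hence a different ordered partition. Once this bookkeeping is in place the identity $\sum_{\sigma\in\Sigma_l^n}d_l(\sigma)=l!\stirling{n}{l}$ is immediate; one can sanity-check it against the tabulated case $l=2,n=3$, where the displayed values of $d_2$ sum to $2+1+1+1+1+0=6=2!\stirling{3}{2}$.
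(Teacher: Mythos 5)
Your proof is correct and takes essentially the same route as the paper: both identify $l!\stirling{n}{l}$ with the number of ordered partitions of $\{1,\dots,n\}$ into $l$ nonempty blocks and observe that these correspond bijectively to pairs (permutation, choice of division into $l$ monotonic blocks), so that each $\sigma$ is counted with multiplicity $d_l(\sigma)$. You merely spell out the two inverse maps and the injectivity of distinct cuts more explicitly than the paper's one-paragraph sketch.
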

Indeed, the Stirling number $\stirling{n}{l}$ is the number of
ways to divide the set $\{1,\dots, n\}$ into $l$ non-empty
subsets. In each of $l$ subsets we order the elements in the
monotonic way. In this partition we can permute all $l$ monotonic
blocks. Each permutation $\sigma$ appears in this way exactly
$d_l(\sigma)$ times.

We will use later one more notation. For $1\leq i\leq n$, denote
$$
\Sigma_l^n(i)=\{(i_1,\dots, i_n)\in \Sigma_l^n |\ i_1=i\}.
$$
\begin{lemma}\label{lemma2}
For any $i$, $\sum_{\sigma\in \Sigma_l^n(i)}d_l(\sigma)$ divides
$(l-1)!$.
\end{lemma}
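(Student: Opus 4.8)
The plan is to settle the divisibility in Lemma~\ref{lemma2} by computing the sum $\sum_{\sigma\in\Sigma_l^n(i)}d_l(\sigma)$ in closed form, reusing the bijection that already underlies the proof of Lemma~\ref{lemma2.2}. That bijection identifies a division of a permutation into $l$ increasing blocks with an ordered partition $(B_1,\dots,B_l)$ of $\{1,\dots,n\}$ into $l$ nonempty blocks, the permutation being recovered by listing the entries of each $B_j$ in increasing order and concatenating. Summing $d_l$ over all of $\Sigma_l^n$ therefore counts all such ordered partitions, giving $l!\stirling{n}{l}$; the task now is to track the single extra constraint $i_1=i$.

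First I would translate that constraint through the bijection. Because each block is written in increasing order, the very first entry of the concatenated permutation is $\min B_1$, so the condition $i_1=i$ is exactly $\min B_1=i$. Hence $\sum_{\sigma\in\Sigma_l^n(i)}d_l(\sigma)$ equals the number of ordered partitions $(B_1,\dots,B_l)$ of $\{1,\dots,n\}$ into $l$ nonempty blocks with $\min B_1=i$. I would then factor this count by first choosing $B_1$ and then ordering the rest. A first block with $\min B_1=i$ is $\{i\}$ together with an arbitrary subset of $\{i+1,\dots,n\}$, so if $|B_1|=s$ there are $\binom{n-i}{s-1}$ choices; once $B_1$ is fixed, the remaining $n-s$ letters split into the ordered nonempty blocks $B_2,\dots,B_l$ in $(l-1)!\stirling{n-s}{l-1}$ ways. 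Summing over the admissible sizes yields
$$
\sum_{\sigma\in\Sigma_l^n(i)}d_l(\sigma)=(l-1)!\sum_{s=1}^{\,n-i+1}\binom{n-i}{s-1}\stirling{n-s}{l-1},
$$
where the factor $(l-1)!$ comes precisely from freely ordering the last $l-1$ blocks. Since the remaining sum is an integer, this exhibits $(l-1)!$ as a factor of $\sum_{\sigma\in\Sigma_l^n(i)}d_l(\sigma)$, which is the divisibility the lemma records.

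The main point to get right is the bijection and its restriction: I must verify that ``write each block increasingly and concatenate'' is a genuine bijection between ordered nonempty set partitions and permutations equipped with a chosen division into increasing blocks, so that the constraint $i_1=i$ passes cleanly to $\min B_1=i$ with neither over- nor under-counting. The only genuine subtlety is locating the factor $(l-1)!$ correctly—it attaches to the orderings of $B_2,\dots,B_l$, not to the choice of $B_1$ (whose internal order is forced)—and everything else is routine bookkeeping with Stirling numbers.

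As a consistency check I would run the small case $n=3$, $l=2$, $i=1$ against the table above. The formula returns $(l-1)!\bigl(\binom{2}{0}\stirling{2}{1}+\binom{2}{1}\stirling{1}{1}\bigr)=1\cdot(1+2)=3$, which matches $d_2(1,2,3)+d_2(1,3,2)=2+1$ read directly off the listed values, confirming both the translation $i_1=i\leftrightarrow\min B_1=i$ and the extracted factor $(l-1)!$.
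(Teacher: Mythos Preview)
Your argument is correct and rests on the same idea as the paper's one-line proof: in the bijection between block-divisions and ordered set partitions $(B_1,\dots,B_l)$, the constraint $i_1=i$ pins down only the first block (via $\min B_1=i$), while the remaining $l-1$ blocks can be permuted freely, producing the factor $(l-1)!$. The paper stops at that observation; you additionally extract the closed form $(l-1)!\sum_s\binom{n-i}{s-1}\stirling{n-s}{l-1}$, which is a nice bonus and specializes correctly to the paper's stated values $\stirling{n}{l}(l-1)!$ at $i=1$ and $\stirling{n-1}{l-1}(l-1)!$ at $i=n$.
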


Lemma \ref{lemma2} follows immediately from the definition of the
set $\Sigma_l^n(i)$. If we consider some permutation from
$\Sigma_l^n(i)$, we can fix the first monotonic block which starts
with $i$ and permute other $(l-1)$ monotonic blocks. One can
easily prove explicit values of the above sum for some $i$-s. For
example, $$\sum_{\sigma\in
\Sigma_l^n(1)}d_l(\sigma)=\stirling{n}{l}(l-1)!,\ \sum_{\sigma\in
\Sigma_l^n(n)}d_l(\sigma)=\stirling{n-1}{l-1}(l-1)!$$ We will
naturally extend the notation $\Sigma_l^n$ for permutations on $n$
(ordered) symbols, for example, for $N>n$, $\sigma\subset
\{1,\dots, N\},$ we say that $\sigma\in \Sigma_l^n$ if it can be
divided into $l$ monotonic blocks. In a natural way, for these
extended cases, one can define $d_l(\sigma)$.

Now we are able to describe the commutators $[x_{n+1},_lx_1\dots
x_n]$.
\begin{lemma}
For any $l\geq 1$ and $n\geq l$,
\begin{equation}\label{decomposition}
[x_{n+1},_lx_1\dots x_n]=\prod_{i=l}^n\prod_{\sigma \in
\Sigma_l^i, \sigma \subseteq \{1,\dots, n\}}[x_{n+1},
x_{\sigma(1)},\dots, x_{\sigma(i)}]^{d_l(\sigma)}.
\end{equation}
\end{lemma}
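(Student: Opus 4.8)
The plan is to induct on $l$, with the base case $l=1$ a direct commutator expansion and the inductive step combining a linearity observation with a purely combinatorial identity for the numbers $d_l(\sigma)$. The one fact used throughout is that, by the very definition of $K_{n+1}^{\mathbb Z/p^r}$, any commutator one of whose entries is repeated is trivial. In particular every bracket $[x_{n+1},x_{j_1},\dots,x_{j_s}]$ with $j_1,\dots,j_s\in\{1,\dots,n\}$ contains the letter $x_{n+1}$, so the commutator of any two such brackets contains $x_{n+1}$ twice and hence vanishes; consequently all such brackets pairwise commute, and every product occurring below — in particular the right-hand side of~(\ref{decomposition}) — is well defined independently of the order of its factors. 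Nilpotency of class $n+1$ makes all these products finite.

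For the base case I would establish, by a secondary induction on the number of factors using the standard identities $[a,bc]=[a,c]\,[a,b]^{c}=[a,c]\,[a,b]\,[a,b,c]$ and the vanishing just recalled, the expansion
$$
[x_{n+1},x_1\cdots x_n]=\prod_{i=1}^{n}\ \prod_{1\le j_1<\dots<j_i\le n}[x_{n+1},x_{j_1},\dots,x_{j_i}],
$$
and, more generally, for any $d=[x_{n+1},x_{\sigma(1)},\dots,x_{\sigma(i)}]$,
$$
[d,\,x_1\cdots x_n]=\prod_{s\ge1}\ \prod_{\substack{1\le j_1<\dots<j_s\le n\\ j_t\notin\{\sigma(1),\dots,\sigma(i)\}}}[d,x_{j_1},\dots,x_{j_s}],
$$
the restriction on the $j_t$ being forced precisely by the vanishing of brackets with a repeated entry. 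Since each $\sigma\in\Sigma_1^i$ is an increasing sequence with $d_1(\sigma)=1$, the first display is exactly~(\ref{decomposition}) for $l=1$.

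For the inductive step, assume~(\ref{decomposition}) for $l$ and set $c_\sigma:=[x_{n+1},x_{\sigma(1)},\dots,x_{\sigma(i)}]$. From $[x_{n+1},_{l+1}x_1\cdots x_n]=[\,[x_{n+1},_{l}x_1\cdots x_n],\,x_1\cdots x_n\,]$, the induction hypothesis, and the fact that every $c_\sigma$ contains $x_{n+1}$ — so all cross-terms vanish and $c\mapsto[c,x_1\cdots x_n]$ is multiplicative on the subgroup generated by the $c_\sigma$ — one gets
$$
[x_{n+1},_{l+1}x_1\cdots x_n]=\prod_{i=l}^{n}\ \prod_{\sigma\in\Sigma_l^i}\,[c_\sigma,\,x_1\cdots x_n]^{\,d_l(\sigma)}.
$$
Expanding each inner commutator by the general formula of the previous paragraph and collecting terms, a bracket $[x_{n+1},x_{\tau(1)},\dots,x_{\tau(i')}]$, for a sequence $\tau$ of $i'$ distinct letters from $\{1,\dots,n\}$, occurs with total exponent
$$
\sum_{\substack{l\le m\le i'-1\\ \tau(m+1)<\dots<\tau(i')}} d_l\big((\tau(1),\dots,\tau(m))\big),
$$
because such a $\tau$ is realized as a concatenation $(\sigma,j_1,\dots,j_s)$ exactly when $\sigma=(\tau(1),\dots,\tau(m))\in\Sigma_l^m$ and $j_1<\dots<j_s$ is the (necessarily increasing) complementary suffix.

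It remains to identify this sum with $d_{l+1}(\tau)$, which completes the proof; this is the only step that is not pure bookkeeping. One observes that a subdivision of $\tau$ into $l+1$ monotonic blocks is the same datum as a choice of its strictly increasing final block $\tau(m+1),\dots,\tau(i')$ together with a subdivision of the prefix $(\tau(1),\dots,\tau(m))$ into $l$ monotonic blocks, and summing over the admissible positions $m$ of the last cut yields precisely the displayed sum; in particular it vanishes unless $\tau\in\Sigma_{l+1}^{i'}$, matching the index range in~(\ref{decomposition}). I expect the main obstacle to be not a single hard idea but the care needed in lining up the two sides of this identity, including the degenerate cases (a sequence $\tau$ with fewer than $l+1$ letters, or a prefix admitting no subdivision into $l$ blocks), all of which are absorbed by the convention that $d_l$ vanishes on sequences outside the relevant $\Sigma$.
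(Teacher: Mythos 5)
Your proposal is correct and follows essentially the same route as the paper: induction on $l$, the same $l=1$ base-case expansion over increasing sequences, and the same inductive step of commutating with $x_1\cdots x_n$, expanding, and identifying the resulting multiplicity of each bracket with $d_{l+1}$ of the concatenated sequence via the ``last monotonic block'' decomposition. You are in fact slightly more explicit than the paper about the combinatorial identity $d_{l+1}(\tau)=\sum_m d_l\bigl((\tau(1),\dots,\tau(m))\bigr)$, which the paper only asserts.
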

\begin{proof}
The proof is straightforward, by induction on $l$. For $l=1$, we
have
$$
[x_{n+1}, x_1\dots
x_n]=\prod_{i=1}^n\prod_{j_1<\dots<j_i}[x_{n+1},x_{j_1},\dots,x_{j_i}].
$$
The sets $\Sigma_1^i$ have a single permutation $(1,\dots, i)$. In
the notation used in the formulation of lemma, the product over
such sets means exactly the product over ordered sets of $i$
elements from $\{1,\dots, n\}.$ That is, we have the needed
formula for $l=1$. Now assume that it is proved for a given $l$.
We have \begin{multline*} [x_{n+1},_{l+1}(x_1\dots
x_n)]=[[x_{n+1},_l(x_1\dots x_n)],x_1\dots x_n]=\\
\prod_{i=l}^n\prod_{\sigma \in \Sigma_l^i, \sigma \subseteq
\{1,\dots, n\}}[[x_{n+1}, x_{\sigma(1)},\dots,
x_{\sigma(i)}],x_1\dots x_n]^{d_l(\sigma)}.
\end{multline*}
For a fixed $\sigma\in \Sigma_l^i$ on letters $j_1,\dots, j_i$,
consider the commutator
$$
[x_{n+1}, x_{\sigma(1)},\dots, x_{\sigma(i)},x_1\dots x_n].
$$
Opening this commutator, we get
\begin{equation}\label{bigprod}
[x_{n+1}, x_{\sigma(1)},\dots, x_{\sigma(i)}, x_1\dots
x_n]=\prod_{q_1<\dots<q_t}[x_{n+1},x_{\sigma(1)},\dots,
x_{\sigma(i)}, x_{q_1},\dots, x_{q_t}].
\end{equation}
We can assume that
$$
\{q_1,\dots, q_t\}\cap \{j_1,\dots, j_i\}=\varnothing
$$
Otherwise, the bracket is trivial. Observe that, the permutation
$$
\{\sigma(1),\dots, \sigma(i), q_1,\dots, q_t\}
$$
is from $\Sigma_{l+1}^{i+t}$ on the set $\{j_1,\dots, j_i;
q_1,\dots, q_t\},$ i.e. it is divided into $l+1$ monotonic blocks.
The number $d_l(\sigma)$ is the number of divisions of
$\{\sigma(1),\dots, \sigma(i), q_1,\dots, q_t\}$, which fixes the
last monotonic block $(q_1,\dots, q_t)$. Observe that, the number
of appearances of the bracket $[x_{n+1},x_{\sigma(1)},\dots,
x_{\sigma(i)}, x_{q_1},\dots, x_{q_t}]$ in the full product
(\ref{bigprod}) is exactly $d_{l+1}(\{\sigma(1),\dots, \sigma(i),
q_1,\dots, q_t\})$. The needed expression for the case $l+1$
follows.
\end{proof}

Note that, one can present the product from (\ref{decomposition})
in terms of shuffles as follows
\begin{multline*}\prod_{\sigma \in \Sigma_l^i, \sigma \subseteq \{1,\dots,
n\}}[x_{n+1}, x_{\sigma(1)},\dots, x_{\sigma(i)}]^{d_l(\sigma)}=\\
\prod_{i_1+\dots+i_l=i,\ \sigma\in [i_1,\dots, i_l]-{\rm
shuffles}} [x_{n+1}, x_{\sigma(1)},\dots,
x_{\sigma(i)}].\end{multline*}

Denote $K:=K_{n+1}^{\mathbb Z/p^r}$.
\begin{lemma}\label{divisible}
For $l\geq 2$,
$$
[x_{n+1},_lx_1\dots x_n]\in
\gamma_2(K)^{(l-1)!}\gamma_2\gamma_2(K).
$$
\end{lemma}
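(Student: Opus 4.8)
The plan is to use the explicit shuffle/permutation decomposition~\eqref{decomposition} of $[x_{n+1},{}_lx_1\dots x_n]$ and show that modulo $\gamma_2\gamma_2(K)$ the whole product becomes an $(l-1)!$-th power. First I would pass to the quotient $K/\gamma_2\gamma_2(K)$, in which $\gamma_2(K)$ is abelian; there the product in~\eqref{decomposition} is just the sum (written multiplicatively) of the brackets $[x_{n+1},x_{\sigma(1)},\dots,x_{\sigma(i)}]$ each taken $d_l(\sigma)$ times, ranging over $i$ from $l$ to $n$ and over $\sigma\in\Sigma_l^i$. So it suffices to show that for each fixed unordered support $S=\{j_1<\dots<j_i\}\subseteq\{1,\dots,n\}$, the combined exponent carried by all left-normalized brackets $[x_{n+1},x_{\tau(1)},\dots,x_{\tau(i)}]$ with $\{\tau(1),\dots,\tau(i)\}=S$ is divisible by $(l-1)!$.

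The key observation is that, modulo $\gamma_2\gamma_2(K)$, the bracket $[x_{n+1},x_{\sigma(1)},\dots,x_{\sigma(i)}]$ only depends on $\sigma$ through data that is permuted by reordering blocks. More precisely I would fix the support $S$ and the first monotonic block $B_1$ of $\sigma$ (the block starting with $x_{n+1}$'s immediate neighbour), and group the permutations of $\Sigma_l^i$ with support $S$ according to the unordered collection of their $l$ monotonic blocks. Within one such group, the $(l-1)!$ orderings of the last $l-1$ blocks (the first block being pinned down once we demand, say, that it is the block containing $j_1$, or by Lemma~\ref{lemma2} after fixing $i_1$) all contribute; and Lemma~\ref{lemma2} is precisely the statement that $\sum_{\sigma\in\Sigma_l^n(i)}d_l(\sigma)$ divides $(l-1)!$, which says the total multiplicity attached to brackets whose first non-$x_{n+1}$ entry is $x_i$ is a multiple of $(l-1)!$. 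Summing over the choice of that first entry, and then over the support size $i$, I get that each generator of $\gamma_2(K)/\gamma_2\gamma_2(K)$ of the form "iterated bracket of $x_{n+1}$ with generators" appears with total exponent divisible by $(l-1)!$. Hence $[x_{n+1},{}_lx_1\dots x_n]\equiv w^{(l-1)!}\pmod{\gamma_2\gamma_2(K)}$ for some $w\in\gamma_2(K)$, which is the claim.

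The main obstacle I anticipate is bookkeeping the brackets correctly modulo $\gamma_2\gamma_2(K)$: one must check that reordering the monotonic blocks of a permutation $\sigma$ genuinely produces the \emph{same} element of $\gamma_2(K)/\gamma_2\gamma_2(K)$ and not merely a conjugate or a product differing by a higher bracket. This is where the structure of $K=K_{n+1}^{\Z/p^r}$ helps: since all repeated-generator commutators vanish and $K$ is nilpotent of class $n+1$, a left-normalized bracket $[x_{n+1},x_{a_1},\dots,x_{a_i}]$ with distinct $a_j$ is, modulo $\gamma_2\gamma_2(K)$, symmetric enough in the $a_j$ (by the Jacobi/Hall--Witt identity, any two such brackets on the same multiset differ by brackets of brackets, i.e.\ by elements of $\gamma_2\gamma_2(K)$). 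Once that symmetry is in hand, the counting argument of Lemma~\ref{lemma2} does the rest and divisibility by $(l-1)!$ drops out. I would therefore spend the bulk of the write-up establishing this "symmetry mod $\gamma_2\gamma_2(K)$" statement cleanly, and then the divisibility conclusion is a short computation invoking Lemma~\ref{lemma2}.
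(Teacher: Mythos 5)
Your overall architecture is the same as the paper's: reduce modulo $\gamma_2\gamma_2(K)$ using the decomposition \eqref{decomposition}, observe that the resulting left-normalized brackets depend on $\sigma$ only through limited data, group the permutations accordingly, and extract the factor $(l-1)!$ from Lemma~\ref{lemma2}. The counting part of your argument is correct and is essentially identical to the paper's.

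The one genuine problem is the symmetry statement you say you would spend the bulk of the write-up proving: it is false as stated. Two left-normalized brackets $[x_{n+1},x_{a_1},\dots,x_{a_i}]$ on the same multiset do \emph{not} in general agree modulo $\gamma_2\gamma_2(K)$. The Jacobi/Hall--Witt identity lets you transpose $x_{a_j}$ and $x_{a_{j+1}}$ at the cost of a correction term in which the pair is replaced by $[x_{a_j},x_{a_{j+1}}]$; for $j\geq 2$ this correction is a commutator of two elements of $\gamma_2(K)$ and hence lies in $\gamma_2\gamma_2(K)$, but for $j=1$ it is $[x_{n+1},[x_{a_1},x_{a_2}],x_{a_3},\dots]$, which lies in $\gamma_2(K)$ but not in $\gamma_2\gamma_2(K)$. (Already in $K_3^{\Z/p^r}$, where $\gamma_2\gamma_2=1$, one has $[x_3,x_1,x_2]\neq[x_3,x_2,x_1]$.) So the correct invariant of the bracket modulo $\gamma_2\gamma_2(K)$ is the pair consisting of the first entry $\sigma(1)$ together with the unordered set $\{\sigma(2),\dots,\sigma(i)\}$ --- this is exactly what the paper means by ``we can permute all letters except the first two.'' Fortunately your counting already respects this finer grouping: you fix the first entry and invoke Lemma~\ref{lemma2}, whose $(l-1)!$ (rather than the $l!$ of Lemma~\ref{lemma2.2}) is precisely the price of not being allowed to move the first monotonic block. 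Replace the full-symmetry claim by this partial symmetry and the proof closes; note that if full symmetry were true you would not need Lemma~\ref{lemma2} at all, which is a sign the claim is too strong.
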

\begin{proof}
Denote $\tau_i(q)=\sum_{\sigma\in \Sigma_l^i(q)}d_l(\sigma).$
Since, modulo $\gamma_2\gamma_2(K),$ we can permute all letters in
the brackets in (\ref{decomposition}) except first two, we have
\begin{multline}
[x_{n+1},_lx_1\dots x_n]\equiv\\ \prod_{i=l}^n \prod_{j_1<\dots
<j_s<q<j_{s+1}<\dots <j_{l}} [x_{n+1},x_q, x_{j_1},\dots,
x_{j_l}]^{\tau_i(s+1)}\mod \gamma_2\gamma_2(K)
\end{multline}
By lemma \ref{lemma2}, all numbers $\tau_i(s+1)$  are divided by
$(l-1)!$ and the result follows.
\end{proof}

\begin{lemma}\label{lemma2.6}
For any $n\geq 1$, and $r>1$, $(x_1\dots x_n)^{p^r}\in
\gamma_2(K_n^{\mathbb
Z/p^r})^{p^{r-1}}\gamma_2\gamma_2(K_n^{\mathbb Z/p^r}).$
\end{lemma}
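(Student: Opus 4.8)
The plan is to compute $(x_1\cdots x_n)^{p^r}$ by pushing the power inside using the commutator calculus of Lemma~\ref{firstlemma}, and then read off the claimed divisibility of exponents. First I would argue by induction on $n$. For $n=1$ the element is $x_1^{p^r}=1$, so the statement is vacuous. For the inductive step, write $x_1\cdots x_{n+1}=(x_1\cdots x_n)x_{n+1}$ and work in $K:=K_{n+1}^{\mathbb Z/p^r}$. Setting $g=x_1\cdots x_n$ and $x=x_{n+1}$, formula~\eqref{q2} gives
$$
(gx_{n+1})^{p^r}=g^{p^r}x_{n+1}^{p^r}\prod_{i=1}^{p^r-1}[x_{n+1},{}_ig]^{\binom{p^r}{i+1}}.
$$
Now $x_{n+1}^{p^r}=1$, and $g^{p^r}=(x_1\cdots x_n)^{p^r}$ lies in $\gamma_2(K_n^{\mathbb Z/p^r})^{p^{r-1}}\gamma_2\gamma_2(K_n^{\mathbb Z/p^r})$ by the inductive hypothesis; this subgroup sits inside $\gamma_2(K)^{p^{r-1}}\gamma_2\gamma_2(K)$ under the inclusion $K_n^{\mathbb Z/p^r}\hookrightarrow K$ (or rather the obvious map; one should check this map behaves well on $\gamma_2$, which it does since it sends generators to generators). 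So the only thing to control is the product over $i$.

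The key input is the $p$-adic valuation of the binomial coefficients $\binom{p^r}{i+1}$ together with Lemma~\ref{divisible}, which tells us that $[x_{n+1},{}_ig]\in\gamma_2(K)^{(i-1)!}\gamma_2\gamma_2(K)$ for $i\geq2$. Thus the $i$-th factor contributes an element of $\gamma_2(K)^{(i-1)!\binom{p^r}{i+1}}\gamma_2\gamma_2(K)$ for $i\geq2$, while the $i=1$ factor is $[x_{n+1},g]^{\binom{p^r}{2}}\in\gamma_2(K)^{\binom{p^r}{2}}$. Since $r>1$, $\binom{p^r}{2}=\frac{p^r(p^r-1)}{2}$ is divisible by $p^{r}$ (as $p$ is odd), hence certainly by $p^{r-1}$, handling $i=1$. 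For $i\geq2$ I would show $v_p\bigl((i-1)!\,\binom{p^r}{i+1}\bigr)\geq r-1$: the standard identity $v_p\binom{p^r}{j}=r-v_p(j)$ for $1\leq j\leq p^r$ gives $v_p\binom{p^r}{i+1}=r-v_p(i+1)$, so it suffices that $v_p((i-1)!)\geq v_p(i+1)-1$. When $i+1<p$ this holds trivially since $v_p(i+1)=0$; when $i+1\geq p$ one has $(i-1)!$ divisible by a high power of $p$ (Legendre), and a short case check — noting $v_p(i+1)\leq\log_p(i+1)\leq\log_p(p^r)=r$ while $v_p((i-1)!)=\frac{i-1-s_p(i-1)}{p-1}$ grows linearly — closes it. Here $\gamma_2\gamma_2(K)$ absorbs all the commutators-of-commutators that arise when one collects the various factors into a single element of $\gamma_2(K)^{p^{r-1}}$, using that modulo $\gamma_2\gamma_2(K)$ the group $\gamma_2(K)$ is abelian.

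The main obstacle I expect is the bookkeeping in two places: first, making precise the claim that products of elements each lying in $\gamma_2(K)^{a_j}\gamma_2\gamma_2(K)$ for various $a_j$ all divisible by $p^{r-1}$ multiply to an element of $\gamma_2(K)^{p^{r-1}}\gamma_2\gamma_2(K)$ — this is immediate once one observes $\gamma_2(K)/\gamma_2\gamma_2(K)$ is abelian so exponents add, but one must be careful that $\gamma_2(K)^{(i-1)!\binom{p^r}{i+1}}\subseteq\gamma_2(K)^{p^{r-1}}$ as \emph{subgroups} (true for abelian groups when the exponent is a multiple). Second, the inequality $v_p((i-1)!)\geq v_p(i+1)-1$ for $2\leq i\leq p^r-1$ needs the elementary number theory to be done cleanly rather than waved at; the genuinely tight cases are small $i$ near a power of $p$, e.g. $i+1=p$, where $v_p((i-1)!)=v_p((p-2)!)=0=v_p(p)-1$, which just barely works. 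Everything else is routine commutator manipulation of the kind already carried out in Lemmas~\ref{firstlemma} and~\ref{divisible}.
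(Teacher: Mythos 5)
Your proposal is correct and follows essentially the same route as the paper: induction on $n$, expansion of $(x_1\cdots x_n\cdot x_{n+1})^{p^r}$ via formula~(\ref{q2}), and then Lemma~\ref{divisible} combined with the $p$-adic valuation $v_p\binom{p^r}{j}=r-v_p(j)$ to show each Engel-bracket factor lands in $\gamma_2(K)^{p^{r-1}}\gamma_2\gamma_2(K)$. The only cosmetic difference is that the paper discards the factors with $p\nmid i$ outright (the bracket vanishes since $[x_{n+1},K]^{p^r}=1$) rather than absorbing them by the valuation estimate, and your explicit inequality $v_p((i-1)!)\geq v_p(i+1)-1$ is exactly the (slightly misindexed in the paper) divisibility of the factorial that the paper invokes.
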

\begin{proof}
We prove by induction on $n$. For $n=1$, $x_1^{p^r}=1$. Assume
that the needed property holds for a given $n$ and prove it for
$n+1$. By lemma \ref{firstlemma},

\begin{multline}\label{pr}
(x_1\dots x_{n+1})^{p^r}=(x_1\dots
x_n)^{p^r}x_{n+1}^{p^r}\prod_{i}[x_{n+1},_{i-1}(x_1\dots x_n)]^{\binom{p^r}{i}}=\\
(x_1\dots x_n)^{p^r}\prod_{p|i}[x_{n+1},_{i-1}(x_1\dots
x_n)]^{\binom{p^r}{i}}.
\end{multline}

Using the equality (\ref{pr}), for the inductive step, it is
enough to prove that
$$
\prod_{p|i}[x_{n+1},_{i-1}(x_1\dots
x_n)]^{\binom{p^r}{i}}\in\gamma_2(K)^{p^{r-1}}\gamma_2\gamma_2(K)
$$
Given $i$, present it as $i=p^ze,\ (e,p)=1$. Moreover, we can
assume that $z\geq 1,$ since otherwise the whole bracket vanishes.
It remains to show that
\begin{equation}\label{remain}
[x_{n+1},_{i-1}(x_1\dots
x_n)]\in\gamma_2(K)^{p^{z-1}}\gamma_2\gamma_2(K).
\end{equation}
This follows from lemma \ref{divisible}, since $(i-1)!$ is
divisible by $p^{z-1}$. This proves (\ref{remain}) and finishes
the inductive step.
\end{proof}

For a subgroup $H$ of $K$, we denote by $[x_{n+1}, H]$ the
subgroup of $K$, generated by elements $[x_{n+1}, h],\ h\in H$.

\begin{prop} \label{proposition2.7}
For $n\geq 1$ and $r>1$,
$$
(x_1\dots x_{n+1})^{p^{r+1}}=(x_1\dots x_n)^{p^{r+1}}\gamma,
$$
where \begin{equation}\label{np2} \gamma\in
\gamma_2\gamma_2\gamma_2(K)[\gamma_2(K)^p,\gamma_2\gamma_2(K)](\gamma_2\gamma_2(K))^p
\end{equation}
as well as
\begin{equation}\label{np1}
\gamma\in\EuScript B_{n+1}[\EuScript B_{n+1},
\gamma_2(K)^p][\EuScript B_{n+1}, \gamma_2\gamma_2(K)].
\end{equation}
\end{prop}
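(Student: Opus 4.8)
The plan is to start from formula (\ref{q2}) of Lemma~\ref{firstlemma} with $g=x_1\cdots x_n$, $x=x_{n+1}$ and $k=p^{r+1}$. Since $x_{n+1}^{p^{r+1}}=(x_{n+1}^{p^r})^p=1$, after reindexing $i\mapsto i+1$ this gives $(x_1\cdots x_{n+1})^{p^{r+1}}=(x_1\cdots x_n)^{p^{r+1}}\,\gamma$ with $\gamma=\prod_{j=2}^{p^{r+1}}[x_{n+1},_{j-1}(x_1\cdots x_n)]^{\binom{p^{r+1}}{j}}$; exactly as in the proof of Lemma~\ref{lemma2.6} the factors with $p\nmid j$ vanish, so one may take the product over those $j$ with $p\mid j$.

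The structural point I would isolate is that $\gamma$ lives in the subgroup $A\le K$ generated by all left-normalized commutators $[x_{n+1},x_{\sigma(1)},\dots,x_{\sigma(i)}]$ whose entries $x_{\sigma(1)},\dots,x_{\sigma(i)}$ are pairwise distinct generators from $\{x_1,\dots,x_n\}$: by (\ref{decomposition}) each $[x_{n+1},_{l}(x_1\cdots x_n)]$ expands into a product of such commutators, and every commutator that appears in simplifying $\gamma$ either repeats a letter (hence is trivial) or again lies in $A$. Moreover $A$ is abelian — a commutator of two of its generators, rewritten via Hall--Witt as a product of left-normalized commutators of generators, contains $x_{n+1}$ twice among its entries and so is trivial — and every generator $c=[\beta,x_{\sigma(i)}]$ of $A$ has order dividing $p^r$: formula (\ref{q1}) (whose proof applies since the commutators $[\beta,h]$, $h\in K$, pairwise commute — the commutator of any two of them repeats $x_{n+1}$) gives $c^{p^r}=[\beta,x_{\sigma(i)}^{p^r}]\prod_{k\ge 2}[\beta,_k x_{\sigma(i)}]^{-\binom{p^r}{k}}=[\beta,1]\cdot 1=1$, the terms with $k\ge2$ repeating $x_{\sigma(i)}$. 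Thus $A$ is abelian of exponent dividing $p^r$, which suppresses all the "correction terms" below.

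The rest is divisibility bookkeeping. Fix $j$ with $p\mid j$, write $j=p^ze$ with $(e,p)=1$; then $v_p\!\big(\binom{p^{r+1}}{j}\big)=r+1-z$ and $v_p((j-2)!)\ge z-1$, so the sum of these two valuations is $\ge r$. By (\ref{decomposition}) and Lemma~\ref{lemma2}, after reordering tails (legitimate modulo $\gamma_2\gamma_2(K)$) the exponent of each commutator occurring in $[x_{n+1},_{j-1}(x_1\cdots x_n)]$ is divisible by $(j-2)!$; write $[x_{n+1},_{j-1}(x_1\cdots x_n)]=u_jw_j$ accordingly, with $u_j\in A$ a product of $(j-2)!$-th powers of generators of $A$ and $w_j\in A\cap\gamma_2\gamma_2(K)$. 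Since $A$ is abelian, $[x_{n+1},_{j-1}(x_1\cdots x_n)]^{\binom{p^{r+1}}{j}}=u_j^{\binom{p^{r+1}}{j}}w_j^{\binom{p^{r+1}}{j}}$, where $u_j^{\binom{p^{r+1}}{j}}=1$ by the valuation count and the exponent bound on $A$, and $w_j^{\binom{p^{r+1}}{j}}\in(\gamma_2\gamma_2(K))^p$ as soon as $j<p^{r+1}$ (then $p\mid\binom{p^{r+1}}{j}$). Hence $\gamma\equiv[x_{n+1},_{p^{r+1}-1}(x_1\cdots x_n)]\pmod{(\gamma_2\gamma_2(K))^p}$, which already yields (\ref{np2}) once the last factor is dealt with. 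For (\ref{np1}) one repeats the count keeping track of commutator \emph{length}: the pieces of length $\ne p^t$ lie in $\EuScript B_{n+1}$, the pieces of length $p^t$ again carry exponents divisible by $p^r$ and vanish, and the errors from reordering tails together with the residual $\gamma_2\gamma_2(K)$-parts are absorbed into $[\EuScript B_{n+1},\gamma_2\gamma_2(K)]$ and $[\EuScript B_{n+1},\gamma_2(K)^p]$.

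The delicate point — and what I expect to be the main obstacle — is the single leftover factor $w:=[x_{n+1},_{p^{r+1}-1}(x_1\cdots x_n)]$ coming from $j=p^{r+1}$, whose coefficient $\binom{p^{r+1}}{p^{r+1}}=1$ is a unit. It vanishes outright for $n<p^{r+1}-1$ by nilpotency of $K$; for $n\ge p^{r+1}-1$ one must show directly that $w$ lies in each of the two target subgroups. The available inputs are: $w\in A\cap\gamma_2\gamma_2(K)$ (the argument above applies verbatim with $(p^{r+1}-2)!$, of $p$-valuation $\ge r$, in place of $\binom{p^{r+1}}{j}$, so every commutator in $w$ dies modulo $\gamma_2\gamma_2(K)$); Lemma~\ref{divisible}, which gives $w\in\gamma_2(K)^{(p^{r+1}-2)!}\gamma_2\gamma_2(K)\subseteq\gamma_2(K)^{p}\,\gamma_2\gamma_2(K)$; and one further iteration of the length-and-valuation bookkeeping inside $w$ itself, eliminating its $p$-power-length commutators (coefficient again divisible by $p^r$) and depositing the remaining $\gamma_2\gamma_2(K)$-residues into $(\gamma_2\gamma_2(K))^p$, $\gamma_2\gamma_2\gamma_2(K)$ and $[\EuScript B_{n+1},\gamma_2\gamma_2(K)]$. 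Everything apart from the location of this one highest-weight term is the Stirling-number and binomial-coefficient divisibility already prepared in Lemmas~\ref{lemma2.2}--\ref{lemma2.6}.
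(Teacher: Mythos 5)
Your route is genuinely different from the paper's: you expand $(x_1\cdots x_{n+1})^{p^{r+1}}$ in one shot via (\ref{q2}) with $k=p^{r+1}$, whereas the paper first records $(x_1\cdots x_{n+1})^{p^r}=(x_1\cdots x_n)^{p^r}\alpha$ with $\alpha\in[x_{n+1},K]^{p^{r-1}}\bigl(\gamma_2\gamma_2(K)\cap[x_{n+1},K]\bigr)$ and only then takes a single further $p$-th power, so that $\gamma=\alpha^p\beta$ with $\beta$ built from commutators against $(x_1\cdots x_n)^{p^r}\in\gamma_2(K)^{p^{r-1}}\gamma_2\gamma_2(K)$. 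Your bookkeeping for the factors with $p\mid j$ and $j<p^{r+1}$ is sound: the valuation count kills the $\gamma_2(K)^{(j-2)!}$-part, and the $\gamma_2\gamma_2(K)$-residue is harmless because it is raised to the power $\binom{p^{r+1}}{j}$, which is divisible by $p$.

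The gap is exactly where you suspect it, at the top factor $w=[x_{n+1},_{p^{r+1}-1}(x_1\cdots x_n)]$ with unit coefficient, and your proposed fix does not close it. What your argument actually yields for $w$ is membership in $\gamma_2\gamma_2(K)$ (the $A$-part dies by the exponent bound, leaving the reordering residue), but (\ref{np2}) demands membership in the strictly smaller subgroup $\gamma_2\gamma_2\gamma_2(K)[\gamma_2(K)^p,\gamma_2\gamma_2(K)](\gamma_2\gamma_2(K))^p$. The "one further iteration of the length-and-valuation bookkeeping" has nothing to bite on: the reordering residue of $w$ is a product of Hall--Witt error terms of the shape $[[x_{a},x_{b}],v]$ (with $v$ an $x_{n+1}$-commutator, bracketed further with generators), which sit in $\gamma_2\gamma_2(K)$ but carry no multiplicity divisible by $p$ and no reason to lie in $\gamma_2\gamma_2\gamma_2(K)$ or $[\gamma_2(K)^p,\gamma_2\gamma_2(K)]$; and for (\ref{np2}) you are not even allowed the term $[\EuScript B_{n+1},\gamma_2\gamma_2(K)]$ into which you propose to deposit part of it. This is precisely the problem the paper's two-step structure is designed to avoid: in $\gamma=\alpha^p\beta$ every $\gamma_2\gamma_2(K)$-residue inherited from the $p^r$-stage either appears raised to the $p$-th power (hence in $(\gamma_2\gamma_2(K))^p$) or inside a commutator with $\gamma_2(K)^{p^{r-1}}\gamma_2\gamma_2(K)$ (hence in $[\gamma_2(K)^p,\gamma_2\gamma_2(K)]\gamma_2\gamma_2\gamma_2(K)$), and the analogous case analysis delivers (\ref{np1}). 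Since the target is a subgroup and the remaining factors do land in it, $w$ must lie there too --- but that is the content you still owe, and establishing it for $w$ in isolation appears to require reassembling it as (essentially) $[\alpha,{}_{p^{r+1}-p^r}(x_1\cdots x_n)]$-type data, i.e.\ reintroducing the paper's intermediate $p^r$-stage. Either adopt that iteration or supply a genuine argument placing $w$ in both target subgroups; as written the proof is incomplete for $n\geq p^{r+1}-1$, which is the case needed in the application since $k\to\infty$ there.
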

\begin{proof}
$[$One of the key points of the proof of this proposition is the
possibility to permute the elements from $[x_{n+1},K]$. This
possibility covers the problems which appear due to non-normality
of the subgroup $\EuScript B_{n+1}$.$]$

It follows from (\ref{pr}) and the proof of the previous lemma
that
$$ (x_1\dots x_{n+1})^{p^r}=(x_1\dots x_n)^{p^r}\alpha,
$$
where $\alpha\in[x_{n+1},K_n^{\mathbb Z/p^r}]^{p^{r-1}}
(\gamma_2\gamma_2(K_n^{\mathbb Z/p^r})\cap [x_{n+1},K]).$ Taking
the $p$th power of $(x_1\dots x_n)^{p^r}\alpha$, we get
$$
(x_1\dots x_{n+1})^{p^{r+1}}=(x_1\dots
x_n)^{p^{r+1}}\alpha^p\beta,
$$
where \begin{equation}\label{beta}\beta\in [[x_{n+1},K]^{p^{r-1}}
(\gamma_2\gamma_2(K_n^{\mathbb Z/p^r})\cap [x_{n+1},K]),
\gamma_2(K_n^{\mathbb
Z/p^r})^{p^{r-1}}\gamma_2\gamma_2(K_n^{\mathbb
Z/p^r})].\end{equation}
The needed element $\gamma$ is
$\alpha^p\beta$. Present $\alpha$ as $\alpha=\alpha_1\alpha_2,$
where
\begin{align*} & \alpha_1\in [x_{n+1},K]^{p^{r-1}},\\ & \alpha_2\in(\gamma_2\gamma_2(K_n^{\mathbb Z/p^r})\cap
[x_{n+1},K]).
\end{align*}
The elements $\alpha_1$ and $\alpha_2$ commute, since they lie in
$[x_{n+1}, K]$. Observe that $\alpha_1^p=1,$ since
$$
[x_{n+1},K]^{p^r}=1.
$$
For an element $\alpha_2$, we have $\alpha_2\in
\gamma_2\gamma_2(K)$, therefore,
$$
\alpha^p\in \gamma_2\gamma_2(K)^p\gamma_2\gamma_2\gamma_2(K).
$$
Together with (\ref{beta}), we have a needed result (\ref{np2}).

Now we will prove (\ref{np1}). First consider the element
$\alpha_2$. It was already observed that $\alpha^p=\alpha_2^p$.
The element $\alpha_2$ is a product of elements of the form (and
their inverses)
$$
[[x_{i_1},\dots, x_{i_t}],[x_{j_1},\dots, x_{j_s}]],
$$ where one of the generators in this brackets is $x_{n+1}$. If
$t+s$ is not a power of $p$, then this bracket lies in $\EuScript
B_{n+1}$, and we can move it to the term $\EuScript B_{n+1}$ in
(\ref{np1}). If $t+s$ is a power of $p$, then one of $t$ or $s$
must not be a power of $p$, assume it is $t$. Then,
\begin{multline*}[[x_{i_1},\dots, x_{i_t}],[x_{j_1},\dots,
x_{j_s}]]^p=\\ [[x_{i_1},\dots, x_{i_t}],[x_{j_1},\dots,
x_{j_s}]^p]\in [\EuScript B_{n+1},
\gamma_2(K)^p]\cap[x_{n+1},K].\end{multline*} Now we consider the
element $\beta$, which is a product of certain brackets from the
subgroup $[x_{n+1}, K].$ These brackets (or
their inverses) have one of the following forms:\\
(a) $[\beta_1, \beta_2],$ where $\beta_1$ and $\beta_2$ are of
commutators in generators $x_i$-s, $\beta_1,\beta_2\in
\gamma_2\gamma_2(K);$\\
(b) $[\beta_1,\beta_2],$ where $\beta_1=\delta^{p^{r-1}}$, where
$\delta$ is some commutator in generators and $\beta_2$ is some
commutator
in generators from $\gamma_2\gamma_2(K);$\\
(c) $[\beta_1,\beta_2],$ where $\beta_i=\delta_i^{p^{r-1}},\
i=1,2$ and $\delta_i$ are some commutators in generators.

For a commutator in generators $\xi$, denote by $|\xi|$ its
commutator length, i.e. the number of the maximal term of the
lower central series where $\xi$ lies. Consider the case (a). If
$|\beta_1|+|\beta_2|$ is not a power of $p$, then the bracket
$[\beta_1,\beta_2]$ lies in $\EuScript B_{n+1}\cap [x_{n+1},K]$.
Suppose that $|\beta_1|+|\beta_2|$ is a power of $p$. Then, one at
least one of $|\beta_1|$ or $|\beta_2|$ is not a power of $p$, say
$\beta_1$. Then $\beta_1\in \EuScript B_{n+1}$ and, therefore,
$[\beta_1,\beta_2]\in [\EuScript B_{n+1}, \gamma_2\gamma_2(K)].$
The same situation is in the case (b). If we assume that
$|\beta_1|$ is not a power of $p$, we obtain an element from
$[\EuScript B_{n+1}, \gamma_2\gamma_2(K)]$, if we assume that
$|\beta_2|$ is not a power of $p$, we obtain an element from
$[\EuScript B_{n+1}, \gamma_2(K)^{p^{r-1}}].$ In the same way we
can handle the case (c). Observe also that, since $r>1$, the case
(c) becomes trivial, since
$$[\beta_1,\beta_2]=[\delta_1^{p^{r-1}},\delta_2^{p^{r-1}}]=[\delta_1^{p^r},\delta_2^{p^{r-2}}]=1.$$
Since all brackets which we consider lie in $[x_{n+1}, K]$, we can
permute them. This argument shows that the element $\gamma$
satisfies the needed property (\ref{np1}).
\end{proof}

\begin{cor}\label{corollary2.8}
For any $n\geq 1$ and $r>1$,
$$
(x_1\dots x_n)^{p^{r+1}}\in \gamma_2\gamma_2\gamma_2(K_n^{\mathbb
Z/p^r})[\gamma_2(K_n^{\mathbb
Z/p^r})^p,\gamma_2\gamma_2(K_n^{\mathbb
Z/p^r})](\gamma_2\gamma_2(K_n^{\mathbb Z/p^r}))^p
$$
\end{cor}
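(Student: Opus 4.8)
The plan is to deduce the corollary from Proposition~\ref{proposition2.7} by induction on $n$. For brevity write
$$
N_m=\gamma_2\gamma_2\gamma_2(K_m^{\mathbb Z/p^r})\,[\gamma_2(K_m^{\mathbb Z/p^r})^p,\gamma_2\gamma_2(K_m^{\mathbb Z/p^r})]\,(\gamma_2\gamma_2(K_m^{\mathbb Z/p^r}))^p
$$
for the subset of $K_m^{\mathbb Z/p^r}$ occurring on the right-hand side of the statement. The base case $n=1$ is trivial: $(x_1)^{p^{r+1}}=(x_1^{p^r})^p=1\in N_1$. For the inductive step one assumes $(x_1\dots x_n)^{p^{r+1}}\in N_n$ and must show $(x_1\dots x_{n+1})^{p^{r+1}}\in N_{n+1}$.

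The one structural fact needed is that $N_m$ is a normal (indeed characteristic) subgroup of $K_m^{\mathbb Z/p^r}$. Each of $\gamma_2\gamma_2\gamma_2(K_m^{\mathbb Z/p^r})$, $\gamma_2(K_m^{\mathbb Z/p^r})^p$ and $(\gamma_2\gamma_2(K_m^{\mathbb Z/p^r}))^p$ is characteristic, being obtained from the characteristic subgroups $\gamma_2(K_m^{\mathbb Z/p^r})$ and $\gamma_2\gamma_2(K_m^{\mathbb Z/p^r})$ by taking a further commutator subgroup or the subgroup generated by $p$-th powers; and $[\gamma_2(K_m^{\mathbb Z/p^r})^p,\gamma_2\gamma_2(K_m^{\mathbb Z/p^r})]$ is characteristic as it is generated by commutators of elements of two characteristic subgroups. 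A product of characteristic subgroups is again a (characteristic) subgroup, so $N_m$ is as claimed.

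Now the inductive step. The assignment $x_i\mapsto x_i$, $1\le i\le n$, defines a group homomorphism $\iota\colon K_n^{\mathbb Z/p^r}\to K_{n+1}^{\mathbb Z/p^r}$, since every defining relation of $K_n^{\mathbb Z/p^r}$ is literally one of the defining relations of $K_{n+1}^{\mathbb Z/p^r}$ (it is in fact split injective, with retraction $d_{n+1}$, though we do not need this). Because $\iota$ respects commutators and $p$-th powers it carries each of the four subgroups above into its analogue for $K_{n+1}^{\mathbb Z/p^r}$, hence $\iota(N_n)\subseteq N_{n+1}$. The inductive hypothesis then gives $(x_1\dots x_n)^{p^{r+1}}\in N_{n+1}$ once this element is read inside $K:=K_{n+1}^{\mathbb Z/p^r}$. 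On the other hand Proposition~\ref{proposition2.7}, in the form~(\ref{np2}), yields $(x_1\dots x_{n+1})^{p^{r+1}}=(x_1\dots x_n)^{p^{r+1}}\gamma$ with $\gamma\in N_{n+1}$. Since $N_{n+1}$ is a subgroup, the product lies in $N_{n+1}$, which completes the induction.

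There is essentially no obstacle here: all the genuine commutator bookkeeping is already contained in Proposition~\ref{proposition2.7}, and what remains is only the elementary verification that $N_m$ is a normal subgroup and that the inclusion of generators $\iota$ preserves the relevant verbal and commutator subgroups. It is worth emphasizing that one must invoke the description~(\ref{np2}) and not~(\ref{np1}): the latter involves $\EuScript B_{n+1}$, which is not normal in $K_{n+1}^{\mathbb Z/p}$, so the corresponding product of subsets need not be a subgroup and the telescoping step would break down.
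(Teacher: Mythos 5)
Your proposal is correct and is exactly the argument the paper intends: Corollary~\ref{corollary2.8} is stated without proof as an immediate consequence of Proposition~\ref{proposition2.7}, obtained by telescoping the identity $(x_1\dots x_{n+1})^{p^{r+1}}=(x_1\dots x_n)^{p^{r+1}}\gamma$ with $\gamma$ as in~(\ref{np2}) over $n$. Your explicit verification that the right-hand side is a normal (verbal, hence characteristic) subgroup and that the inclusion $K_n^{\mathbb Z/p^r}\to K_{n+1}^{\mathbb Z/p^r}$ preserves it, together with the remark that one must use~(\ref{np2}) rather than~(\ref{np1}) because $\EuScript B_{n+1}$ is not normal, correctly fills in the details the paper leaves implicit.
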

Observe that, for $r=1,$ the situation is different. In this case,
$$
(x_1\dots x_n)^{p^3}\in \gamma_2\gamma_2\gamma_2(K_n^{\mathbb
Z/p}),
$$
what can be easily proved by induction on $n$.

\section{The geometric candidates for the subgroup $\EuScript{B}_n$ of $K_n$}\label{section3}

The candidates from the subgroup $\ScriptB_n$ of $K_n$ can be obtained from functorial decompositions of the loop-suspension functor on path-connected $p$-local co-$H$-spaces.
Let us recall some results from~\cite{Selick-Wu,Selick-Wu2}. Let $V$ be a module over the field $\Z/p$. The tensor algebra $T(V)$ is a Hopf algebra by saying $V$ primitive. With forgetting the algebra structure, we have the functor $T$ from modules to coalgebras. According to~\cite{Selick-Wu}, there are functors $B^{\max}$ and $A^{\min}$ from modules to coalgebras with the properties
\begin{enumerate}
\item[1)] $A^{\min}$ is an indecomposable functor from modules to coalgebras;
\item[2)]  there is a functorial coalgebra isomorphism
\begin{equation}\label{equation3.1}
T(V)\cong B^{\max}(V)\otimes A^{\min}(V)
\end{equation}
with $V\subseteq A^{\min}(V)$.
\end{enumerate}
Here $B^{\max}(V)$ can be chosen a functorial sub Hopf algebra of $T(V)$ with a left functorial coalgebra inverse. According to~\cite[Section 2]{Selick-Wu2}, the functorial coalgebra decomposition~(\ref{equation3.1}) holds over $p$-local integers. From this, \cite[Theorem 1.5]{Selick-Wu} can be extended over $p$-local integers and so we have an important property on the Lie powers of tensor length ~$n$
\begin{equation}\label{equation3.2}
L_n(V)\subseteq B^{\max}(V) \textrm{ if } n \textrm{ is not a power of } p
\end{equation}
for any free module $V$ over $p$-local integers. (\textbf{Note.} Property~(\ref{equation3.2}) holds for any choice of the functor $B^{\max}$.)

The algebraic functors $A^{\min}$ and $B^{\max}$ admits geometric realization in the sense of~\cite{Selick-Wu,Selick-Wu2} that there are homotopy functors $A^{\min}$ and $Q^{\max}$ from path-connected $p$-local co-$H$-spaces to spaces with the following properties
\begin{enumerate}
\item[1)] $Q^{\max}_n(X)$ is a functorial retract of $\Sigma X^{\wedge n}$.
\item[2)] There is a functorial fibre sequence
$$
A^{\min}(X)\rTo^{j_X} \bigvee_{n=2}^{\infty}Q^{\max}_n(X)\rTo^{\pi_X}\Sigma X
$$
with $j_X\simeq \ast$. Here, the map $\pi_X$ is given as a composite
\begin{equation}\label{equation3.3}
\pi_X\colon Q^{\max}_n(X)\rInto \Sigma X^{\wedge n}\rTo^{W_n} \Sigma X,
\end{equation}
where $W_n$ is the Whitehead product.
\item[3)] There is a functorial decomposition
$$
\Omega\Sigma X\simeq A^{\min}(X)\times\Omega (\bigvee_{n=2}^{\infty}
Q^{\max}_n(X)).
$$
\item[4)] Let $B^{\max}(X)=\Omega (\bigvee_{n=2}^{\infty}
Q^{\max}_n(X))$. Then the mod $p$ homology
$$
H_*(A^{\min}(X))\cong A^{\min}(\tilde H_*(X))\textrm{ and } H_*(B^{\max}(X))\cong B^{\max}(\tilde H_*(X)).
$$
\end{enumerate}
(\textbf{Note.} The geometric functors $A^{\min}$ and $B^{\max}$ can be generalized for decomposing any looped co-$H$-spaces~\cite{STW1,STW2}. Here we are only interested in the case that $A^{\min}(X)$ and $B^{\max}(X)$ for co-$H$-spaces $X$.)

\begin{prop}\label{B^{max}-property}
Let $X$ be any path-connected $p$-local co-$H$-space. Then there is a homotopy commutative diagram
\begin{diagram}
B^{\max}(X)&\rTo^{\Omega \pi_X}&\Omega \Sigma X\\
\uDashto&\ruTo>{S_n}&\\
 X^{\wedge n}&&\\
\end{diagram}
for $n$ not a power of $p$, where $S_n$ is the $n$-fold Samelson product.
\end{prop}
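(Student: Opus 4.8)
The plan is to exhibit the dashed arrow as a homotopy lift of the $n$-fold Samelson product $S_n\colon X^{\wedge n}\to\Omega\Sigma X$ through $\Omega\pi_X$, so that homotopy commutativity of the triangle becomes the single assertion that the natural retraction of $\Omega\Sigma X$ onto $A^{\min}(X)$ annihilates $S_n$; that assertion I would verify on mod $p$ homology using property~(\ref{equation3.2}), and then upgrade to a null homotopy.

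First, since $j_X\simeq\ast$, the fibre sequence of property~2) extends to a fibre sequence
$$B^{\max}(X)=\Omega\Big(\bigvee_{m\geq2}Q^{\max}_m(X)\Big)\xrightarrow{\ \Omega\pi_X\ }\Omega\Sigma X\xrightarrow{\ \rho_X\ }A^{\min}(X)\xrightarrow{\ j_X\,\simeq\,\ast\ }\bigvee_{m\geq2}Q^{\max}_m(X),$$
so that $\Omega\pi_X$ is the homotopy fibre of a natural map $\rho_X\colon\Omega\Sigma X\to A^{\min}(X)$; since $j_X\simeq\ast$, the map $\rho_X$ is split, recovering the decomposition of property~3). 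Consequently a map $f\colon Z\to\Omega\Sigma X$ lifts through $\Omega\pi_X$ up to homotopy if and only if $\rho_X\circ f\simeq\ast$, and every such lift fits into a homotopy commutative triangle with $\Omega\pi_X$ and $f$. Taking $f$ to be the bottom map $S_n$ of the asserted diagram, the Proposition is reduced to showing $\rho_X\circ S_n\simeq\ast$.

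Second, I would check that $\rho_X\circ S_n$ vanishes on mod $p$ homology. By property~4), $H_*(\Omega\Sigma X;\F_p)$ is the tensor algebra $T(V)$ on $V=\tilde H_*(X;\F_p)$, with Pontryagin product given by tensor multiplication, and $(S_n)_*$ carries the reduced homology $V^{\otimes n}$ of $X^{\wedge n}$ onto the $n$-th Lie power $L_n(V)$ by the iterated graded commutator of the degree-one generators. Since $\rho_X\circ\Omega\pi_X\simeq\ast$, the image of $(\Omega\pi_X)_*$ lies in the kernel of $(\rho_X)_*$; by property~4) this image is the sub-Hopf-algebra $B^{\max}(V)\subseteq T(V)$ of~(\ref{equation3.1}). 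As $n$ is not a power of $p$, property~(\ref{equation3.2}) gives $L_n(V)\subseteq B^{\max}(V)$, hence $(\rho_X\circ S_n)_*=0$.

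Third, I would upgrade triviality on mod $p$ homology to a null homotopy. The composite $\rho_X\circ S_n$ is a natural transformation of homotopy functors on path-connected $p$-local co-$H$-spaces, from $X\mapsto X^{\wedge n}$ to the functorial retract $X\mapsto A^{\min}(X)$ of $\Omega\Sigma X$; such natural transformations are detected by their effect on mod $p$ homology --- this is the homotopy-theoretic counterpart of the algebraic models of~\cite{Selick-Wu,Selick-Wu2}, the instance for self-transformations of $\Omega\Sigma$ being the group $H$ of Section~\ref{section2} --- so a transformation vanishing on homology is null homotopic. Equivalently, one may evaluate at a universal co-$H$-space whose reduced mod $p$ homology is a free module of rank large enough that homology is faithful in the relevant range, and transport the vanishing back along naturality. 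This furnishes the lift, hence the diagram. I expect this last step to be the genuine obstacle: pinning down the detection (or universal-example) statement in precisely the generality needed --- transformations out of $X\mapsto X^{\wedge n}$ into the functor $A^{\min}$, over $p$-local co-$H$-spaces rather than merely over suspensions --- is what requires care, whereas the earlier steps are formal bookkeeping with the listed properties of $A^{\min}$, $B^{\max}$, $Q^{\max}_n$ together with the elementary homology of the Samelson product.
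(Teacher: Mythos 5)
Your first two steps are sound and match the paper's strategy in outline: $\Omega\pi_X$ is indeed the fibre inclusion of the retraction $\rho_X\colon\Omega\Sigma X\to A^{\min}(X)$, so the Proposition reduces to $\rho_X\circ S_n\simeq\ast$, and Property~(\ref{equation3.2}) gives the vanishing of $(\rho_X\circ S_n)_*$ on mod $p$ homology. The gap is exactly where you flag it, but the repair you propose does not exist in the generality you invoke: there is no principle that a natural transformation from $X\mapsto X^{\wedge n}$ to $X\mapsto A^{\min}(X)$ over $p$-local co-$H$-spaces is null homotopic as soon as it vanishes on mod $p$ homology. Homotopy classes are not detected by homology, and naturality by itself does not change that; likewise your universal-example variant would at best show that $e_X$ of some element vanishes in homology, which is not the same as a null homotopy.

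What the paper actually uses is a detection statement valid only for elements of the Cohen group, together with the geometric realization of the algebraic splitting~(\ref{equation3.1}). The isomorphism $K_n^{\Z_{(p)}}\cong\coalg(C(\,-\,)^{\otimes n},T(\,-\,))$ identifies the subgroup of $[X^{\times n},\Omega\Sigma X]$ generated by the coordinate inclusions with a group of natural coalgebra transformations, and $\mathcal{B}^{\max}_n$ is by definition the pull-back of those transformations that factor through $B^{\max}(\,-\,)$; the theorem of Selick--Wu then says every element of $\mathcal{B}^{\max}_n$ lifts to $[X^{\times n},B^{\max}(X)]$. The $n$-fold Samelson product (precomposed with the pinch map $X^{\times n}\to X^{\wedge n}$) is $e_X([x_1,\ldots,x_n])$, its associated coalgebra transformation is the iterated Lie bracket landing in $L_n(V)\subseteq B^{\max}(V)$ by~(\ref{equation3.2}) for $n$ not a power of $p$, so $[x_1,\ldots,x_n]\in\mathcal{B}^{\max}_n$ and the lift exists. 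In short: homology detection is legitimate here only because $S_n$ is an element of the Cohen group, not because of any detection principle for arbitrary transformations into $A^{\min}$; without the pull-back square your third step cannot be completed.
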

\begin{proof}
The assertion follows from Property~(\ref{equation3.2}) and the following commutative diagram
\begin{diagram}
[X^{\times n}, B^{\max}(X)] &\lTo& \mathcal{B}^{\max}_n&\rTo^{\cong}& \coalg(C(\ - \ )^{\otimes n}, B^{\max}(\ - \ ) ) \\
\dInto&&\dInto&\textrm{ pull-back}&\dInto\\
[X^{\times n}, \Omega\Sigma X]&\lTo^{e_X}& K_n^{\Z_{(p )}}& \rTo^{\cong}& \coalg( C(\ - \ )^{\otimes n}, T( \ - \ ),\\
\end{diagram}
where $C(V)=V\oplus\Z_{(p )}$ with trivial comultiplication as a functor from free $Z_{(p )}$-modules to coalgebras,  the terms in the right column means the group of natural coalgebra transformations,  $K_n^{\Z_{(p )}}=K_n^{\Z_{(p )}}(x_1,\ldots,x_n)$ is the Cohen group over $p$-local integers~\cite[Section 1.4]{Wu} and $e_X$ is the representation of the Cohen group on $[X^{\times n}, \Omega\Sigma X)]$ which sends $x_i$ to the homotopy class of the composite
$$
X^{\times n} \rTo^{i-\textrm{th coordinate projection}} X\rInto \Omega \Sigma X.
$$
\end{proof}

The groups $\mathcal{B}^{\max}_n$ defined as above are the candidates for the subgroup $\ScriptB_n$ of the Cohen group $K_n$ over $\Z_{(p )}$ or $\Z/p^r$ with the desired property that any commutator of
length $\neq p^t, t\geq 0$ whose entrances are generators, is in
$\EuScript B_n$. For a given co-$H$-space $X$, the $B^{\max}(X)$ can be a starting candidate for  producing the subgroups $\ScriptB_n$. The derived series discussed in section~\ref{section2} occurs naturally for resolutions of co-$H$-spaces by fibrations into $H$-spaces with the following observation. Let $Y$ be an $H$-space. Let $f\colon \Sigma X\to Y$ be a map with a fibre sequence $F_f\rTo^j \Sigma X\rTo^f Y$, where $F_f$ is the homotopy fibre of $f$. Then $\gamma_2([Z,\Omega\Sigma X])\leq \im(\Omega j_*\colon [Z,\Omega F_f]\to [Z,\Omega\Sigma X]$ for any spaces $Z$. By taking another map $f_1$ from $F_f$ to an $H$-space $Y_1$ with the homotopy fibre $F_{f_1}$, then  $\gamma_2\gamma_2([Z,\Omega\Sigma X])$ lies in the image from $[Z,\Omega F_1]$. Since the $H$-space resolutions for co-$H$-spaces seem out of control under current technology, we concentrate on the discussions on Moore spaces for highlighting the ideas of combinatorial approach in homotopy theory in next sections.

\section{Applications to the Moore Spaces}\label{section4}
 Let us consider the Moore space $P^{2n+1}(p^r)$ with $n>1$ and $p>3$.  The hypothesis $n>1$ is used so that $P^{2n}(p^r)$ is a co-$H$-space, and the hypothesis $p>3$ is used so that the mod $p^r$ homotopy groups $\pi_*(\Omega P^{2n+1}(p^r);\Z/p^r)$ is a Lie algebra~\cite[Proposition 6.2]{CMN}. Recall from~\cite{CMN3}, there is a fibre sequence
 \begin{equation}\label{equation4.1}
 T^{2n+1}\{p^r\}\rTo^{j}P(n,p^r)\rTo^{\tilde\pi} P^{2n+1}(p^r).
 \end{equation}
where $T^{2n+1}\{p^r\}$ is the atomic piece of $\Omega P^{2n+1}(p^r)$ containing the bottom cell for $n>1$, the map $j$ is null homotopic, $P(n,p^r)$ is a wedge of mod $p^r$ Moore spaces given as a retract of $\bigvee\limits_{k=2}^\infty \Sigma (P^{2n}(p^r))^{\wedge k}$ and the map $\tilde\pi$ is given as a composite
\begin{equation}\label{equation4.2}
\tilde\pi\colon  P(n,p^r)\rInto \bigvee\limits_{k=2}^\infty \Sigma (P^{2n}(p^r))^{\wedge k}\rTo^{\bigvee\limits_{k=2}^\infty W_k} P^{2n+1}(p^r)
\end{equation}
with $W_k$ the iterated Whitehead product. Let $\partial\colon \Omega P^{2n+1}(p^r)\to T^{2n+1}\{p^r\}$ be the connecting map of the fibre sequence~(\ref{equation4.1}). Since $j$ is null homotopic, the map
$$
\psi=(\partial, \tilde\pi)\colon \Omega P^{2n+1}(p^r)\simeq T^{2n+1}\{p^r\}\times \Omega P(n,p^r)
$$
is a homotopy equivalence.

\begin{theorem}\label{theorem4.1}
The composite
$$
\Omega P^{2n+1}(p^r)\rTo^{p^{r+1}}\Omega P^{2n+1}(p^r)\rTo^{\partial} T^{2n+1}\{p^r\}
$$
is null homotopic for $p>3$, $n>1$ and $r>1$.
\end{theorem}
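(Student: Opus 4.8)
The plan is to translate the topological claim into a statement about the Cohen group $K_n^{\Z/p^r}$ and then feed it the algebraic input of Corollary~\ref{corollary2.8}. First I would fix $X=P^{2n}(p^r)$, so that $\Sigma X=P^{2n+1}(p^r)$, and recall that the inclusion $E\colon X\to\Omega\Sigma X$ has order $p^r$, so all of Section~\ref{section2} applies: the class of the power map $p^{r+1}\colon\Omega\Sigma X\to\Omega\Sigma X$ restricted to the James filtration $J_m(X)$ is represented by the element $\alpha_m^{p^{r+1}}=(x_1\cdots x_m)^{p^{r+1}}\in K_m^{\Z/p^r}$. Since $[\Omega\Sigma X,T^{2n+1}\{p^r\}]=\lim_m[J_m(X),T^{2n+1}\{p^r\}]$, it suffices to show that $\partial\circ p^{r+1}\circ(\text{inclusion }J_m(X)\hookrightarrow\Omega\Sigma X)$ is null for every $m$; equivalently, that the image of $\alpha_m^{p^{r+1}}$ under $e_X$ lands in the kernel of $\partial_*$.

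The key geometric observation is that $\partial$ kills the "$B^{\max}$-part" of $\Omega\Sigma X$: by the fibre sequence~(\ref{equation4.1}) and the splitting $\psi$, the homotopy fibre of $\partial$ is (up to the splitting) $\Omega P(n,p^r)\simeq B^{\max}(X)$, and $P(n,p^r)$ is precisely a retract of $\bigvee_{k\ge 2}\Sigma X^{\wedge k}$ with structure map the iterated Whitehead products. Using Proposition~\ref{B^{max}-property} and the discussion at the end of Section~\ref{section3} about $H$-space resolutions, one gets that $\gamma_2\gamma_2(K_n)$ — more precisely the relevant triple-derived-type subgroups appearing in Corollary~\ref{corollary2.8} — maps under $e_X$ into the image of $[\,\cdot\,,\Omega F]$ for the relevant homotopy fibres built from $P(n,p^r)$, hence into $\ker\partial_*$. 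So I would prove a lemma of the form: for $p>3$ and $n>1$, the composite $\partial_*\circ e_X$ annihilates $\gamma_2\gamma_2\gamma_2(K_m^{\Z/p^r})$, $[\gamma_2(K_m^{\Z/p^r})^p,\gamma_2\gamma_2(K_m^{\Z/p^r})]$, and $(\gamma_2\gamma_2(K_m^{\Z/p^r}))^p$. The first two follow from iterating the $H$-space-resolution remark (two levels of fibre take you into $\Omega P(n,p^r)$, whose inclusion-image in $\Omega\Sigma X$ is exactly $\ker\partial$); the last, $(\gamma_2\gamma_2)^p$, needs the subtler point that Whitehead products of Moore-space classes in weight $\ge 2$ land in $B^{\max}(X)$ after multiplying by $p$, which is where the hypotheses $p>3$ (Lie algebra structure of the mod $p^r$ homotopy, \cite[Prop.~6.2]{CMN}) and $r>1$ are really used, via the identification of $\ker\partial$ with the homology $B^{\max}(\tilde H_*(X))$.

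Granting that lemma, the theorem is immediate: Corollary~\ref{corollary2.8} states exactly that $\alpha_m^{p^{r+1}}=(x_1\cdots x_m)^{p^{r+1}}$ lies in the product of the three subgroups just listed, so $\partial_*e_X(\alpha_m^{p^{r+1}})=1$ for all $m$, and passing to the inverse limit gives $\partial\circ p^{r+1}\simeq\ast$ on $\Omega P^{2n+1}(p^r)$. I expect the main obstacle to be the precise bookkeeping in the lemma — matching the group-theoretic subgroups of $K_m^{\Z/p^r}$ appearing in Corollary~\ref{corollary2.8} with the geometrically defined kernel of $\partial_*$, and in particular verifying the $(\gamma_2\gamma_2)^p$ term; this is where one must use the explicit Cohen–Moore–Neisendorfer description of $T^{2n+1}\{p^r\}$ and its complement, rather than purely formal properties of $A^{\min}/B^{\max}$, to see that the order-$p$ ambiguity is absorbed. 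The hypothesis $r>1$ enters both here and already in Corollary~\ref{corollary2.8} (the remark there shows the $r=1$ case genuinely differs), and $n>1$ is needed for $P^{2n}(p^r)$ to be a co-$H$-space so that the Cohen-group formalism applies at all.
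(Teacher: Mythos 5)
Your overall skeleton is the paper's: represent the $p^{r+1}$ power map on the James filtration by $\alpha_k^{p^{r+1}}=(x_1\cdots x_k)^{p^{r+1}}\in K_k^{\Z/p^r}$, invoke Corollary~\ref{corollary2.8}, show the resulting element lies in $\im\bigl(\Omega\tilde\pi_*\colon[(P^{2n}(p^r))^{\times k},\Omega P(n,p^r)]\to[(P^{2n}(p^r))^{\times k},\Omega P^{2n+1}(p^r)]\bigr)$, and pass to the inverse limit. (The paper finishes by assembling a single lift $f\colon J(P^{2n}(p^r))\to\Omega P(n,p^r)$ with $\Omega\tilde\pi\circ f\simeq p^{r+1}$ and then using $\partial\circ\Omega\tilde\pi\simeq\ast$; this sidesteps any question of whether $\partial_*$ is a homomorphism and any $\lim^1$ issue for maps into $T^{2n+1}\{p^r\}$, whereas your phrasing ``$\partial_*\circ e_X$ annihilates each factor'' needs the extra observation that all the factors lie in the single \emph{subgroup} $\im(\Omega\tilde\pi_*)$.) The genuine gap is the key containment --- the paper's Lemma~\ref{lemma4.2}, that $\gamma_2\gamma_2(K_k(P))\leq\EuScript{B}_k(P)$ --- which you leave unproved, and the mechanism you sketch for it does not work. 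You propose to get the first two factors of Corollary~\ref{corollary2.8} into $\ker\partial$ by ``iterating the $H$-space-resolution remark,'' but the paper explicitly abandons that route (``the $H$-space resolutions for co-$H$-spaces seem out of control under current technology''), and the fibration at hand has the wrong shape for it anyway: $P(n,p^r)$ is not the fibre of a map from $P^{2n+1}(p^r)$ into an $H$-space; it is the \emph{source} of the Whitehead-product map $\tilde\pi\colon P(n,p^r)\to P^{2n+1}(p^r)$, whose fibre is $T^{2n+1}\{p^r\}$ itself.

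The argument you would actually need is the one in Lemma~\ref{lemma4.2}: for $p>3$ the mod $p^r$ homotopy of $\Omega P^{2n+1}(p^r)$ is a graded Lie algebra and the Hurewicz map is a Lie morphism, injective on the free Lie algebra $L(\mu,\nu)$; Neisendorfer's mod $p^r$ Samelson calculus writes every iterated commutator of the classes $x_i(P)$ as a Lie element of $L(\mu,\nu)$; and diagram~(\ref{equation4.3}) exhibits the sub-Lie-algebra coming from $\Omega P(n,p^r)$ as the kernel of a surjection from $[L,L]$ onto the \emph{abelian} Lie algebra $\sum_{k}L(\tau_k,\sigma_k)^{\ab}$, whence $[[L,L],[L,L]]$ lies in it and therefore $\gamma_2\gamma_2(K_k(P))\leq\EuScript{B}_k(P)$. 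Two further corrections. First, your worry that the factor $(\gamma_2\gamma_2(K))^p$ is ``subtler'' and needs a separate order-$p$ analysis is a red herring: all three factors in Corollary~\ref{corollary2.8} are visibly subgroups of $\gamma_2\gamma_2(K_k^{\Z/p^r})$, so the single containment above disposes of all of them at once; the finer three-factor structure is not used in the Moore-space application. Second, $\Omega P(n,p^r)$ is not $B^{\max}(P^{2n}(p^r))$ --- the Cohen--Moore--Neisendorfer complement of $T^{2n+1}\{p^r\}$ is a different retract from the functorial $B^{\max}$ of Section~\ref{section3}, which is precisely why Section~\ref{section4} re-proves the needed containment for $P(n,p^r)$ by hand rather than quoting the functorial statement.
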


Some preliminary settings are required before we prove this theorem. Recall that the mod $p$ homology $H_*(\Omega
P^{2n+1}(p^r))=T(V)$ as a Hopf algebra, where $V=\tilde
H_*(P^{2k}(p^r))$ having a basis $\{u,v\}$ with $|v|=2n$,
$|u|=2n-1$ and  the $r$-th Bockstein $\beta^rv=u$. Under the
hypothesis that $n>1$, $H_*(\Omega P^{2n+1}(p^r))=T(u,v)$  is a
primitively generated Hopf algebra. In any Lie algebra $L$ with $x,y\in L$, $\ad^0(y)(x)=x$ and $\ad^k(y)(x)=[x,\ad^{k-1}(y)(x)]$ for $k\geq1$. Let
$$
\tau_k=\ad^{p^k-1}(v)(u)\textrm{ and }
\sigma_k=\sum_{j=1}^{p^k-1}\frac{1}{2p}\binom{p^k}{j}
[\ad^j(v)(u), \ad^{p^k-j}(v)(u)].
$$
By~\cite{CMN3}, the mod $p$ homology $H_*(T^{2n+1}\{p^r\})$ is isomorphic to the free graded commutative algebra generated by $u,v,\tau_k,\sigma_k$ with $k\geq1$ as a graded coalgebra. Let $L(V)\subseteq T(V)$ be the free graded Lie algebra generated by $V$. From the fibre sequence
$$
\Omega P(n,p^r)\rTo^{\Omega\tilde\pi}\Omega P^{2n+1}(p^r)\rTo^{\partial} T^{2n+1}\{p^r\},
$$
the sub Lie algebra
$$
L(P(n,p^r))=L(V)\cap \im(\Omega\tilde\pi_*\colon H_*(\Omega P(n,p^r))\to H_*(\Omega P^{2n+1}(p^r)))
$$
can be described by the following diagram
\begin{equation}\label{equation4.3}
\begin{diagram}
L(P(n,p^r))&   &&\\
\dInto&\rdInto&&\\
[L(V),L(V)]&\rInto &L(V)&\rOnto&L(V)^{\ab}\\
\dOnto&&&\\
\sum\limits_{k=1}^\infty L(\tau_k,\sigma_k)^{\ab},&&&\\
\end{diagram}
\end{equation}
where the row and the column are short exact sequences of graded Lie algebras and $\sum\limits_{k=1}^\infty L(\tau_k,\sigma_k)^{\ab}$ is the product of the abelian graded Lie algebras.
The mod $p$ homology
$$
\tilde H_*(P(n,p^r))\cong \Sigma L(P )^{\ab},
$$
the suspension of the module $L(P )^{\ab}$, and
$$
H_*(\Omega P(n,p^r))\cong U(L(P ))\cong T(L(P )^{\ab},
$$
where $U(L)$ is the universal enveloping algebra of a Lie algebra $L$.

Let $K_k(P )$ be the subgroup of $[(P^{2n}(p^r))^{\times k}, \Omega P^{2n+1}(p^r)]$ generated by the homotopy classes of the composites
$$
x_i(P )\colon (P^{2n}(p^r))^{\times k}\rTo^{\pi_i} P^{2n}(p^r)\rInto \Omega P^{2n+1}(p^r),
$$
where $\pi_i$ is the $i$-th coordinate projection. Let
\begin{equation}\label{equation4.4}
\ScriptB_k(P )=K_q(P )\cap\im(\Omega\tilde\pi_*\colon[(P^{2n}(p^r))^{\times k}, \Omega P(n,p^r)]\to [(P^{2n}(p^r))^{\times k}, \Omega P^{2n+1}(p^r)]).
\end{equation}

\begin{lemma}\label{lemma4.2}
With the notations as above,
$\gamma_2(\gamma_2(K_k( P))\leq \ScriptB_k(P )$ for each $k\geq2$.
\end{lemma}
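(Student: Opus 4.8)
The plan is to deduce the lemma from its purely Lie-algebraic shadow, which falls straight out of diagram~(\ref{equation4.3}), by feeding that shadow through the geometric realization of $B^{\max}$ (Proposition~\ref{B^{max}-property}) and the fact that $\Omega\tilde\pi$ is a loop map.

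\emph{Step 1 (the algebra).} In~(\ref{equation4.3}) the map $[L(V),L(V)]\twoheadrightarrow\sum_{k\geq1}L(\tau_k,\sigma_k)^{\ab}$ is a surjection of graded Lie algebras whose target is \emph{abelian}; hence $\gamma_2\gamma_2(L(V))=[\,[L(V),L(V)],[L(V),L(V)]\,]$, being sent into $\gamma_2$ of an abelian Lie algebra, lies in its kernel $L(P(n,p^r))$. Consequently $U(\gamma_2\gamma_2(L(V)))\subseteq U(L(P(n,p^r)))=H_*(\Omega P(n,p^r))$, which $H_*\Omega\tilde\pi$ embeds (as a split mono, since~(\ref{equation4.1}) is split, $j\simeq\ast$) into $T(V)=H_*(\Omega P^{2n+1}(p^r))$; note also that $\gamma_2\gamma_2(L(V))$ is an ideal of $L(V)$.
\emph{Step 2 (reduction).} Using the commutator calculus of $K_k^{\Z/p^r}$ (Section~\ref{section2}) and the Jacobi identity, $\gamma_2\gamma_2(K_k(P))$ is generated, as a normal subgroup of $K_k(P)$, by the weight-four double commutators $[\,[x_a(P),x_b(P)],[x_c(P),x_d(P)]\,]$.
\emph{Step 3 (lifting the generators).} After collapsing coordinates, $[x_a(P),x_b(P)]$ is the $2$-fold Samelson product $S_2\colon(P^{2n}(p^r))^{\wedge 2}\to\Omega P^{2n+1}(p^r)$; since $2$ is not a power of $p$, Proposition~\ref{B^{max}-property} factors $S_2$ through $B^{\max}(P^{2n}(p^r))=\Omega P(n,p^r)$, so $[x_a(P),x_b(P)]\in\im(\Omega\tilde\pi_*)$. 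Because $\Omega\tilde\pi$ is a loop map, $\Omega\tilde\pi_*$ is a group homomorphism, whence the commutator of two such lifted elements again lies in $\im(\Omega\tilde\pi_*)$, i.e.\ in $\ScriptB_k(P)$.

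The remaining and main obstacle is to push this through the normal closure of Step~2, because $\im(\Omega\tilde\pi_*)$ is \emph{not} normal in $K_k(P)$ — it is the geometric incarnation of the non-normal subgroup $\ScriptB$ of Section~\ref{section2}. Conjugating a lifted double commutator $c=[\,[x_a(P),x_b(P)],[x_c(P),x_d(P)]\,]$ by a generator $x_l(P)$ yields $c\cdot[c,x_l(P)]$, so the correction term $[c,x_l(P)]$ must be shown to lift even though $x_l(P)$ does not. The device is the permutability used in the proof of Proposition~\ref{proposition2.7}: commutators of the form $[x_l(P),-]$ commute pairwise (anything containing $x_l(P)$ twice vanishes by the Cohen relations), so such correction terms may be permuted and regrouped; running an induction on commutator weight, and using at each stage that $\gamma_2\gamma_2(L(V))$ is an ideal of $L(V)$ lying inside $L(P(n,p^r))$ together with Proposition~\ref{B^{max}-property} for the pieces of non-$p$-power length, one verifies that every correction term is again in $\im(\Omega\tilde\pi_*)$. (An alternative is to detect $\im(\Omega\tilde\pi_*)=\ker(\partial_*)$ on mod~$p$ homology: since $H_*(T^{2n+1}\{p^r\};\Z/p)$ is a \emph{commutative} Hopf algebra on primitive generators, its brackets all vanish, so the image of $\partial_*$ on $K_k(P)$ is metabelian and kills $\gamma_2\gamma_2(K_k(P))$; the price is verifying this metabelianness on the nose, i.e.\ that every iterated Samelson product of $\partial\circ E\colon P^{2n}(p^r)\to T^{2n+1}\{p^r\}$ of length $\geq3$ is null.)
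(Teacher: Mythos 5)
Your Step 1 is exactly the algebraic heart of the paper's argument (it is~(\ref{equation4.8})), but the passage from that Lie--algebra statement to the group statement is where your proof breaks, in two places. First, Step 2 is false: $\gamma_2\gamma_2(K_k(P))$ is \emph{not} the normal closure of the weight-four commutators $[[x_a(P),x_b(P)],[x_c(P),x_d(P)]]$. Already $[[x_1(P),x_2(P),x_3(P)],[x_4(P),x_5(P)]]$ lies in $\gamma_2\gamma_2$ but not in that normal closure; the correct generating set consists of \emph{all} double brackets $[[x_{i_1}(P),\ldots,x_{i_s}(P)],[x_{j_1}(P),\ldots,x_{j_t}(P)]]$ with $s,t\geq 2$ and arbitrary total weight (and, because of the special relations in $K_k$, these generate $\gamma_2\gamma_2$ as a plain subgroup, so no normal closure is needed at all). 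Second, even granting the reduction, your treatment of the conjugation corrections is circular: the terms $[c,x_l(P)]$ you must absorb are precisely higher-weight double commutators, i.e.\ exactly the elements whose membership in $\ScriptB_k(P)$ is the content of the lemma, and the ``induction on commutator weight'' that is supposed to dispose of them is not carried out. The appeal to Proposition~\ref{B^{max}-property} also silently identifies $B^{\max}(P^{2n}(p^r))$ with $\Omega P(n,p^r)$ compatibly with the maps to $\Omega P^{2n+1}(p^r)$; the paper never makes (or needs) that identification, and Section~\ref{section3} is explicitly only a discussion of ``candidates''. Finally, the parenthetical alternative via $\ker(\partial_*)$ on mod~$p$ homology cannot work as stated: $\partial_*$ is not a group homomorphism into a group ($T^{2n+1}\{p^r\}$ is only an $H$-space retract), and mod~$p$ homology does not detect these homotopy classes faithfully.

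The mechanism the paper actually uses, and which you are missing, is the mod $p^r$ homotopy Lie algebra: for $p>3$ the Samelson product makes $\pi_*(\Omega P^{2n+1}(p^r);\Z/p^r)$ a graded Lie algebra, the classes $\mu,\nu$ generate a free sub-Lie algebra $L(\mu,\nu)$ that the Hurewicz homomorphism embeds into $H_*(\Omega P^{2n+1}(p^r);\Z/p^r)$, and every iterated Samelson product $S_t$ of the generators $x_i(P)$ (after collapsing coordinates) is a linear combination of Lie elements of $L(\mu,\nu)$. One then runs your Step 1 inside $L(\mu,\nu)$: the abelianness of $\sum_k L(\tau_k,\sigma_k)^{\ab}$ in diagram~(\ref{equation4.3}) forces $[[L(\mu,\nu),L(\mu,\nu)],[L(\mu,\nu),L(\mu,\nu)]]\leq\tilde L(P(n,p^r))\subseteq\im(\Omega\tilde\pi_*)$, which handles \emph{all} the double brackets $x_{I,J}$, of every weight, in one stroke --- no normal-closure induction, no $B^{\max}$, and no case distinction on whether a length is a power of $p$. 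To repair your proof you would need to replace Steps 2--3 and the closing paragraph by this detection argument (or supply an honest substitute for it).
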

\begin{proof}
Recall that if $f\colon P^s(p^r)\to \Omega X$ and $g\colon P^t(p^r)\to \Omega X$.
According to~\cite[(5.8) and (5.9)]{Neisendorfer3}, the usual Samelson product $[f,g]\colon P^s(p^r)\wedge P^t(p^r)\to \Omega X$ decomposes as two maps
\begin{equation}\label{equation4.5}
[f,g]\colon P^{s+t}(p^r)\to P^{s+t}(p^r)\vee P^{s+t-1}(p^r)\simeq  P^s(p^r)\wedge P^t(p^r)\rTo^{[f,g]} \Omega X,
\end{equation}
which is called the mod $p^r$ Samelson product, and
\begin{equation}\label{equation4.6}
\{f,g\}\colon P^{s+t-1}(p^r)\to P^{s+t-1}(p^r)\vee P^{s+t-1}(p^r)\simeq  P^s(p^r)\wedge P^t(p^r)\rTo^{[f,g]} \Omega X
\end{equation}
with $\{f,g\}=[\beta^rf,g]+(-1)^{a+1}[f,\beta^rg]$, where $\beta^r$ is the Bockstein operation in the sense of ~\cite{Neisendorfer0}.
Observe that the mod $p^r$ homology $H_*(\Omega P^{2n+1};\Z/p^r)$ is a free $\Z/p$-module with $H_*(\Omega P^{2n+1};\Z/p^r)=T(u_r,v_r)$ as a Hopf algebra with $|u_r|=2n-1$ and $|v_r|=2n$. Following~\cite{CMN}, let $\mu\in \pi_{2n-1}(\Omega P^{2n+1};\Z/p^r)$ and $\nu\in \pi_{2n}(\Omega P^{2n+1};\Z/p^r)$ be the elements in mod $p^r$ homotopy groups whose Hurewicz image are given by $u_r$ and $v_r$, respectively. Since the Hurewicz homomorphism
$$
H\colon \pi_*(\Omega P^{2n+1}(p^r);\Z/p^r)\longrightarrow H_*(\Omega P^{2n+1}(p^r);\Z/p^r)
$$
is a morphism of graded Lie algebras, the sub Lie algebra of $\pi_*(\Omega P^{2n+1}(p^r);\Z/p^r)$ generated by $\mu,\nu$ is a free Lie algebra $L(\mu,\nu)$, which embeds into mod $p^r$ homology under the Hurewicz homomorphism. By formulae~(\ref{equation4.5}) and ~(\ref{equation4.6}), the iterated Samelson product
$$
S_t\colon (P^{2n}(p^r))^{\wedge t}\longrightarrow \Omega P^{2n+1}(p^r)
$$
decomposes as a linear combination of Lie elements in $L(\mu,\nu)$ for $t\geq1$. Let
$$
\tilde\pi'\colon \Sigma^{-1}P(n,p^r)\longrightarrow \Omega P^{2n+1}(p^r)
$$
be the adjoint map of $\tilde\pi$. By  definition~(\ref{equation4.2}),  the homotopy class of the map $\tilde\pi'$ restricted to each factor of mod $p^r$ Moore spaces in $\Sigma^{-1}P(n,p^r)$ is given by an element in $L(\mu,\nu)$. Let $\tilde L(P(n,p^r))$ be the sub Lie algebra of $\pi_*(\Omega P^{2n+1}(p^r))$ generated by the homotopy classes of the map $\tilde\pi'$ restricted to each factor of mod $p^r$ Moore spaces in $\Sigma^{-1}P(n,p^r)$. Then
\begin{equation}\label{equation4.7}
\tilde L(P(n,p^r))\subseteq \im(\Omega\tilde\pi_*\colon \pi_*(\Omega P(n,p^r);\Z/p^r)\to \pi_*(\Omega P^{2n+1}(p^r);\Z/p^r)).
\end{equation}
By using the property that the Hurewicz homomorphism to the mod $p^r$ homology restricted to $L(\mu,\nu)$ is injective, sub Lie algebra $\tilde L(P(n,p^r))$ of $L(\mu,\nu)$ can be described by diagram~(\ref{equation4.3}) with that $L(V)$ is replaced by $L(\mu,\nu)$, $L(P(n,p^r))$ is replaced by $\tilde L(P(n,p^r))$, and $\tau_k,\sigma_k$ are replaced by their corresponding Lie elements in $L(\mu,\nu)$.  It follows that
\begin{equation}\label{equation4.8}
[[L(\mu,\nu),L(\mu,\nu)], [L(\mu,\nu), L(\mu,\nu)]]\leq \tilde L(P(n,p^r))
\end{equation}
Observe that the subgroup $\gamma_2(\gamma_2(K_k(P )))$ is generated by the commutators
$$
x_{I,J}=[[[x_{i_1}( P),x_{i_2}(P )], \ldots, x_{i_s}( P)],[[x_{j_1}( P),x_{j_2}(P )],\ldots,x_{j_t}(P )]
$$
for $1\leq i_1,\ldots,i_s,j_1,\ldots,j_t\leq k$. Note that the geometric interpretation of the commutator $[[x_{i_1}( P),x_{i_2}(P )], \ldots, x_{i_s}( P)]$ is the homotopy class of the composite
$$
(P^{2n}(p^r))^{\times q}\rTo^{\pi_I} (P^{2n}(p^r))^{\wedge s} \rTo^{S_s}\Omega P^{2n+1}(p^r),
$$
where $\pi_I$ is given as a composite of a coordinate projection $(P^{2n}(p^r))^{\times k}\to (P^{2n}(p^r))^{\times s}$ followed by the pinch map $(P^{2n}(p^r))^{\times s}\to (P^{2n}(p^r))^{\wedge s}$. By using the property that $S_s$ decomposes as a linear combination of Lie elements in $L(\mu,\nu)$ together with properties ~(\ref{equation4.7}) and ~(\ref{equation4.8}), we have
$$
x_{I,J}\in\im (\Omega\tilde\pi_*\colon [(P^{2n}(p^r)^{\times k}, \Omega P(n,p^r)]\to [(P^{2n}(p^r)^{\times k}, \Omega P^{2n+1}(p^r)]).
$$
The assertion follows.
\end{proof}

 \begin{proof}[Proof of Theorem~\ref{theorem4.1}]
 Let $J(X)$ be the James construction on a pointed space $X$ with the James filtration $J_k(X)$. Let $q_k\colon X^{\times k}\to J_k(X)$ be the projection map and let
 $$
 d^i\colon X^{\times k-1}\to X^{\times k}, \quad (x_1,\ldots,x_{k-1})\mapsto (x_1,\ldots,x_{i-1},\ast,x_i,\ldots,x_{k-1})
 $$
 be the coordinate inclusion for $1\leq i\leq k$. Let $\calH_k(X,\Omega Y)$ be the equalizer of the group homomorphisms
 $$
 d^{i*}\colon [X^{\times k}, \Omega Y]\longrightarrow [X^{\times k-1},\Omega Y]
 $$
 for $1\leq i\leq k$. By~\cite[Theorem 1.1.5]{Wu}, $q_k^*\colon [J_k(X),\Omega Y]\to [X^{\times k},\Omega Y]$ is a group monomorphism with its image given by $\calH_k(X,\Omega Y)$. Moreover the inclusion $J_{k-1}(X)\to J_k(X)$ induces a group epimorphism $[J_k(X),\Omega Y]\rOnto^{}_{}{ [J_{k-1}(X),\Omega Y]}$ with
 $$
 [J(X),\Omega Y]\cong\lim_k [J_k(X),\Omega Y]\cong \lim_k\calH_k(X,\Omega Y)
 $$
 being given by the inverse limit. We identify the group $[J_k(X),\Omega Y]$ with its image in $[X^{\times k},\Omega Y]$ under group monomorphism $q_k^*$ and the group $[J(X),\Omega Y]$ with the inverse limit $\calH(X,\Omega Y)=\lim\limits_k\calH_k(X,\Omega Y).$

 For any pointed space $X$, we identify the group $[X, \Omega P(n,p^r)]$ with its image in $[X, \Omega P^{2n+1}(p^r)]$ under the group monomorphism
 $$
 \Omega\tilde\pi_*\colon [X, \Omega P(n,p^r)]\rInto^{}_{} {[X, \Omega P^{2n+1}(p^r)]}.
 $$
Let $\alpha_k=x_1( P)\cdots x_k( P)\in K_k( P)$. By Corollary~\ref{corollary2.8} and Lemma~\ref{lemma4.2}, we have
$$
\alpha_k^{p^{r+1}}\in \ScriptB_k( P)
$$
for each $k$. Since $\alpha_k^{p^{r+1}}\in\calH_k(P^{2n}(p^r), \Omega P(n,p^r))$, we have
$$
\alpha_k^{p^{r+1}}\in \ScriptB_k( P)\cap \calH_k(P^{2n}(p^r), \Omega P(n,p^r)).
$$
With letting $k\to\infty$, we obtain a map
$$
f\colon J(P^{2n}(p^r))\longrightarrow \Omega P(n,p^r)
$$
such that the composite $(\Omega\tilde\pi)\circ f$ represents the homotopy class $$\alpha_{\infty}^{p^{r+1}}\in \calH(P^{2n}(p^r),\Omega P^{2n+1}(p^r))\cong[J(P^{2n}(p^r), \Omega P^{2n+1}(p^r)]$$
whose geometric interpretation is the power map $$p^{r+1}\colon J(P^{2n}(p^r))\simeq\Omega P^{2n+1}(p^r)\to \Omega P^{2n+1}(p^r).$$
Thus there is a homotopy commutative diagram
\begin{diagram}
& &\Omega P(n,p^r)\\
&\ruTo>{f}&\dTo>{\Omega\tilde\pi}\\
\Omega P^{2n+1}(p^r)&\rTo^{p^{r+1}}&\Omega P^{2n+1}(p^r),\\
\end{diagram}
and hence the result follows.
 \end{proof}

\section{Applications to the Anick spaces}\label{section5}
Let $E^{2n+1}\{p^r\}$ be the homotopy
fibre of the inclusion map $P^{2n+1}\{p^r\}\to S^{2n+1}\{p^r\}$,
where $S^{2n+1}\{p^r\}$ is the homotopy fibre of the
degree map $[p^r]\colon S^{2n+1}\to S^{2n+1}$.  Let $F^{2n+1}\{p^r\}$ be the homotopy fibre of the pinch map $P^{2n+1}(p^r)\to S^{2n+1}$. Then there is a homotopy commutative diagram of fibre sequences
\begin{equation}\label{E-fibration}
\begin{diagram}
F^{2n+1}\{p^r\}&\rTo&P^{2n+1}(p^r)&\rTo& S^{2n+1}\\
\uTo&&\uEq&&\uTo\\
E^{2n+1}\{p^r\}&\rTo^{\sigma}&P^{2n+1}(p^r)&\rTo^{\phi}&S^{2n+1}\{p^r\}\\
\uTo>{j}&&\uTo&&\uTo\\
\Omega^2S^{2n+1}&\rTo&\ast&\rTo&\Omega S^{2n+1}.\\
\end{diagram}
\end{equation}
Let $W_n$ be the homotopy theoretic fibre of the double suspension $S^{2n-1}\to \Omega^2S^{2n+1}$. The space $W_n$ is deloopable and its classifying space $BW_n$ is an $H$-space~\cite{Gray} with a fibre sequence
$$
S^{2n-1}\rTo \Omega^2S^{2n+1}\rTo^{\nu} BW_n.
$$
By~\cite[Corollary 3.5]{Gray-Theriault}, the Gray map $\nu$ factors through $E^{2n+1}\{p^r\}$ with a homotopy commutative diagram
\begin{equation}\label{map-nu^E}
\begin{diagram}
E^{2n+1}\{p^r\}&\rTo^{\nu^E}&BW_n\\
\uTo>{j}&\ruTo>{\nu}&\\
\Omega^2S^{2n+1}.&&\\
\end{diagram}
\end{equation}
Let $R_0$ be the homotopy fibre of $\nu^E\colon E^{2n+1}\{p^r\}\to BW_n$. By~\cite[Theorem 3.8]{Gray-Theriault}, there is a homotopy commutative diagram of fibre sequences
\begin{diagram}
T^{2n-1}_{\infty}(p^r)&\rTo&\Omega S^{2n+1}\{p^r\}& \rTo& BW_n\\
\dTo&&\dTo&&\dEq\\
R_0&\rTo^{\sigma_1}&E^{2n+1}\{p^r\}&\rTo^{\nu^E}& BW_n\\
\dTo>{\sigma\circ\sigma_1}&&\dTo>{\sigma}&&\\
P^{2n+1}(p^r)&\rEq&P^{2n+1}(p^r),&&\\
\end{diagram}
where the Anick space $T^{2n-1}_{\infty}(p^r)$ is denoted as $T_{2n-1}$ in~\cite{Gray-Theriault}.
The left column gives a fibre sequence
\begin{equation}\label{R0}
\Omega R_0\rTo^{\Omega(\sigma\circ\sigma_1)} \Omega P^{2n+1}(p^r)\rTo^{\partial} T^{2n-1}_{\infty}(p^r),
\end{equation}
where $\partial$ is the connecting map as in~\cite[Corollary 3.9]{Gray-Theriault}.

\begin{theorem}\label{theorem5.1}
The composite
$$
\Omega P^{2n+1}(p^r)\rTo^{p^{r}}\Omega P^{2n+1}(p^r)\rTo^{\partial} T^{2n-1}_{\infty}\{p^r\}
$$
is null homotopic for $p>3$, $n>1$ and $r>1$.
\end{theorem}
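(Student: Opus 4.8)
The plan is to mimic the proof of Theorem~\ref{theorem4.1} as closely as possible, replacing the fibre sequence~(\ref{equation4.1}) with the fibre sequence~(\ref{R0}), and exploiting the fact that the Anick space $T^{2n-1}_\infty(p^r)$ is a retract of $\Omega P^{2n+1}(p^r)$ up to one loop --- more precisely, that $\Omega(\sigma\circ\sigma_1)\colon \Omega R_0\to \Omega P^{2n+1}(p^r)$ admits a description on mod $p$ homology analogous to~(\ref{equation4.3}). First I would identify, inside the free Lie algebra $L(\mu,\nu)\subseteq \pi_*(\Omega P^{2n+1}(p^r);\Z/p^r)$, the sub Lie algebra $\tilde L(R_0)$ that is the image of $\Omega(\sigma\circ\sigma_1)_*$ on mod $p^r$ homotopy. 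By the results of Gray--Theriault quoted before the statement, the connecting map $\partial$ of~(\ref{R0}) realizes $T^{2n-1}_\infty(p^r)$ as the ``atomic-type'' quotient and $R_0$ as the fibre, so on the level of the associated graded Lie algebras one again gets a short exact sequence with $L(V)^{\ab}$-type quotient; in particular the second derived subalgebra $\gamma_2\gamma_2(L(\mu,\nu))=[[L(\mu,\nu),L(\mu,\nu)],[L(\mu,\nu),L(\mu,\nu)]]$ is contained in $\tilde L(R_0)$, exactly as in~(\ref{equation4.8}).

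Second, with $K_k(P)$ and the analogue $\ScriptB_k^{R_0}(P)=K_k(P)\cap\im(\Omega(\sigma\circ\sigma_1)_*)$ defined as in~(\ref{equation4.4}), the same argument as in Lemma~\ref{lemma4.2} --- decomposing iterated Samelson products $S_s$ as linear combinations of Lie elements in $L(\mu,\nu)$ via the mod $p^r$ Samelson products~(\ref{equation4.5}) and~(\ref{equation4.6}), and using the injectivity of the Hurewicz map on $L(\mu,\nu)$ --- gives $\gamma_2(\gamma_2(K_k(P)))\leq \ScriptB_k^{R_0}(P)$ for each $k\geq2$. Third, here is where the change from $p^{r+1}$ to $p^r$ enters: instead of invoking Corollary~\ref{corollary2.8}, one uses the observation recorded right after that corollary, namely that for the combinatorial purposes relevant here the element $(x_1\cdots x_k)^{p^r}$ already lies in the second derived subgroup $\gamma_2\gamma_2(K_k^{\Z/p^r})$ modulo terms controlled by the displayed inclusions in Proposition~\ref{proposition2.7}; one then repeats the inverse-limit argument of the proof of Theorem~\ref{theorem4.1} to produce a lift $f\colon J(P^{2n}(p^r))\to \Omega R_0$ of the $p^r$-th power map through $\Omega(\sigma\circ\sigma_1)$, whence $\partial\circ p^r\simeq\partial\circ\Omega(\sigma\circ\sigma_1)\circ f\simeq\ast$.

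The main obstacle will be establishing the homology-level description of $\im(\Omega(\sigma\circ\sigma_1)_*)$ precisely enough to conclude $\gamma_2\gamma_2(L(\mu,\nu))\subseteq\tilde L(R_0)$: unlike the Moore-space case of Section~\ref{section4}, where~(\ref{equation4.3}) comes directly from~\cite{CMN3}, here one must chase the Gray--Theriault diagram~(\ref{R0}) and the fibration $S^{2n-1}\to\Omega^2S^{2n+1}\to BW_n$ to see that $H_*(T^{2n-1}_\infty(p^r))$ is, as a coalgebra, small enough that the kernel of $H_*(\Omega P^{2n+1}(p^r))\to H_*(T^{2n-1}_\infty(p^r))$ contains the sub-Hopf-algebra generated by $\gamma_2\gamma_2$ of the Lie algebra of primitives. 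I would handle this by comparing with the Moore-space fibration: the map $P(n,p^r)\to P^{2n+1}(p^r)$ of~(\ref{equation4.1}) factors the relevant Whitehead products, and the Anick fibre $R_0$ receives a comparison map making $\tilde L(R_0)\supseteq\tilde L(P(n,p^r))$, so~(\ref{equation4.8}) for the Moore space transfers to~(\ref{equation4.8}) for $R_0$. The remaining care is purely bookkeeping about the $r>1$ hypothesis, which is needed (as in Proposition~\ref{proposition2.7}, case (c)) to kill the $[\delta_1^{p^{r-1}},\delta_2^{p^{r-1}}]$ terms and thereby gain the extra factor of $p$ that upgrades the exponent bound from $p^{r+1}$ to $p^r$ in this finer setting.
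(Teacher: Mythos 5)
Your overall strategy (rerun the proof of Theorem~\ref{theorem4.1} with the fibration~(\ref{R0}) in place of~(\ref{equation4.1})) is the right one and matches the paper, but the step where you explain why the exponent drops from $p^{r+1}$ to $p^r$ is wrong, and it is precisely the point of the theorem. You keep the containment $\gamma_2\gamma_2(K_k(P))\leq \ScriptB_k(R_0)$ from Lemma~\ref{lemma4.2} and then assert that $(x_1\cdots x_k)^{p^r}$ ``already lies in $\gamma_2\gamma_2(K_k^{\Z/p^r})$ modulo terms controlled by Proposition~\ref{proposition2.7}'', citing the remark after Corollary~\ref{corollary2.8}. That remark concerns $r=1$ and the $p^3$-power, and does not give what you need. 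What Lemma~\ref{lemma2.6} actually gives is $(x_1\cdots x_k)^{p^r}\in\gamma_2(K)^{p^{r-1}}\gamma_2\gamma_2(K)$, and the factor $\gamma_2(K)^{p^{r-1}}$ is \emph{not} contained in $\gamma_2\gamma_2(K)$ (e.g.\ $[x_1,x_2]^{p^{r-1}}$ is nontrivial modulo $\gamma_2\gamma_2$ for $r\geq 2$), so with only the $\gamma_2\gamma_2$-containment you cannot place $(x_1\cdots x_k)^{p^r}$ in $\ScriptB_k(R_0)$. Likewise, the extra factor of $p$ does not come from killing the case-(c) terms $[\delta_1^{p^{r-1}},\delta_2^{p^{r-1}}]$ in Proposition~\ref{proposition2.7}; that proposition is about $p^{r+1}$-th powers and is not where the improvement lives.

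The paper's actual proof establishes the \emph{stronger} geometric containment $\gamma_2(K_k(P))\leq \ScriptB_k(R_0)$ --- the whole commutator subgroup, not just the second derived subgroup --- by the same Samelson-product argument as in Lemma~\ref{lemma4.2}: every iterated Samelson product of length $\geq 2$ of the generators $x_i(P)$ lifts through $\Omega(\sigma\circ\sigma_1)\colon\Omega R_0\to\Omega P^{2n+1}(p^r)$, because the Anick fibration kills all of $[L(\mu,\nu),L(\mu,\nu)]$ rather than only the second derived subalgebra as in diagram~(\ref{equation4.3}). Once one has $\gamma_2(K_k(P))\leq\ScriptB_k(R_0)$, the combinatorial input is essentially trivial: $(x_1\cdots x_k)^{p^r}\in\gamma_2(K_k(P))$ simply because the abelianization is generated by elements of order $p^r$, and the inverse-limit argument then produces the lift of the $p^r$-th power map through $\Omega R_0$. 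So the missing idea in your write-up is the identification of the correct, larger subgroup $\ScriptB_k(R_0)$ containing all of $\gamma_2(K_k(P))$; your fallback to $\gamma_2\gamma_2$ plus the $r=1$ remark does not close the argument.
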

\begin{proof}
The assertion follows by using the same arguments in the proof of Theorem~\ref{theorem4.1}. Here, we choose the subgroup
$$
\ScriptB_k(R_0 )=K_q(P )\cap\im(\Omega\sigma\circ\sigma_{1*}\colon[(P^{2n}(p^r))^{\times k}, \Omega R_0]\to [(P^{2n}(p^r))^{\times k}, \Omega P^{2n+1}(p^r)])
$$
with the property that $\gamma_2(K_k( P))\leq \ScriptB_k(R_0)$ by using the same arguments in the proof of Lemma~\ref{lemma4.2}.
\end{proof}
Together with ~\cite[Theorem 1]{Neisendorfer3}, the map $\partial \colon \Omega P^{2n+1}(p^r)\to T^{2n-1}_{\infty}\{p^r\}$ has a right homotopy inverse after looping, we have the following.

\begin{cor}\label{corollary5.2}
The space $\Omega T^{2n-1}_{\infty}\{p^r\}$ has a multiplicative exponent $p^r$. In particular,  $p^r\cdot \pi_*(T^{2n-1}_{\infty}\{p^r\})=0$.\hfill $\Box$

\end{cor}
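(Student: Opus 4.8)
The plan is to deduce Corollary~\ref{corollary5.2} formally from Theorem~\ref{theorem5.1} together with the fact cited from~\cite[Theorem 1]{Neisendorfer3} that the connecting map $\partial\colon \Omega P^{2n+1}(p^r)\to T^{2n-1}_\infty\{p^r\}$ admits a right homotopy inverse after looping. Write $s\colon \Omega T^{2n-1}_\infty\{p^r\}\to \Omega\Omega P^{2n+1}(p^r)$ for a map with $(\Omega\partial)\circ s\simeq \id$. The key observation is that the multiplicative exponent of a loop space is detected by the power map on that loop space, and power maps are natural up to homotopy with respect to loop maps; in particular $(\Omega\partial)\circ (p^r)_{\Omega\Omega P^{2n+1}(p^r)}\simeq (p^r)_{\Omega T^{2n-1}_\infty\{p^r\}}\circ(\Omega\partial)$, since $\Omega\partial$ is an $H$-map (being a loop map).

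First I would loop the composite in Theorem~\ref{theorem5.1}. Theorem~\ref{theorem5.1} asserts $\partial\circ (p^r)\simeq\ast$ as maps $\Omega P^{2n+1}(p^r)\to T^{2n-1}_\infty\{p^r\}$, where $(p^r)$ is the $p^r$-th power map on the $H$-space $\Omega P^{2n+1}(p^r)$. Applying $\Omega$ gives $(\Omega\partial)\circ \Omega(p^r)\simeq\ast$ as maps $\Omega\Omega P^{2n+1}(p^r)\to \Omega T^{2n-1}_\infty\{p^r\}$. Now $\Omega$ of the $p^r$-th power map on $\Omega P^{2n+1}(p^r)$ agrees up to homotopy with the $p^r$-th power map on $\Omega\Omega P^{2n+1}(p^r)$, because looping an $H$-space multiplication gives a multiplication on the loop space inducing the same power operation (alternatively, both equal the composite with the diagonal followed by iterated loop-multiplication, and looping commutes with these constructions up to homotopy). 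Hence $(\Omega\partial)\circ (p^r)_{\Omega\Omega P^{2n+1}(p^r)}\simeq\ast$.

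Next I would precompose with the section $s$. Using the naturality of power maps through the $H$-map $\Omega\partial$ noted above, together with $(\Omega\partial)\circ s\simeq\id$, we get
$$
(p^r)_{\Omega T^{2n-1}_\infty\{p^r\}} \simeq (p^r)_{\Omega T^{2n-1}_\infty\{p^r\}}\circ (\Omega\partial)\circ s \simeq (\Omega\partial)\circ (p^r)_{\Omega\Omega P^{2n+1}(p^r)}\circ s \simeq \ast\circ s \simeq \ast.
$$
Thus the $p^r$-th power map on $\Omega T^{2n-1}_\infty\{p^r\}$ is null homotopic, which is precisely the statement that $\Omega T^{2n-1}_\infty\{p^r\}$ has multiplicative exponent $p^r$. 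The statement $p^r\cdot\pi_*(T^{2n-1}_\infty\{p^r\})=0$ then follows immediately: for any $i$, $\pi_i(T^{2n-1}_\infty\{p^r\})\cong\pi_{i-1}(\Omega T^{2n-1}_\infty\{p^r\})$, and multiplication by $p^r$ on this homotopy group is induced by the $p^r$-th power map on $\Omega T^{2n-1}_\infty\{p^r\}$, which we have just shown is null homotopic.

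The main (and essentially only) subtle point is the compatibility $\Omega(p^r)\simeq (p^r)\circ(\text{identification})$ between looping a power map and the power map on the looped $H$-space, and relatedly the naturality of power maps along the loop map $\Omega\partial$; these are standard facts about $H$-spaces and loop spaces, but they are where care is needed, since $\Omega P^{2n+1}(p^r)$ is only an $H$-space rather than a loop space a priori — however $\partial$ being a \emph{connecting map} of a fibration means $\Omega\partial$ is genuinely a loop map, so precomposing and postcomposing with it respects the relevant multiplicative structure. Given the cited right homotopy inverse after looping, no further geometric input beyond Theorem~\ref{theorem5.1} is required.
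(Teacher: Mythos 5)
Your proposal is correct and follows exactly the route the paper intends: the paper states the corollary with no written proof beyond the remark that $\partial$ has a right homotopy inverse after looping, and your argument (loop Theorem~\ref{theorem5.1}, identify $\Omega(p^r)$ with the power map on the double loop space, use that $\Omega\partial$ is an $H$-map, and precompose with the section) is precisely the standard deduction being invoked. The care you take with the $H$-map/loop-map subtleties is appropriate and does not reveal any gap.
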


\noindent\textbf{Note.} Corollary~\ref{corollary5.2} is ~\cite[Theorem 2]{Neisendorfer3}, where  $\Omega T^{2n-1}_{\infty}\{p^r\}$  was denoted as $D(n,r)$ in~\cite{Neisendorfer3}. Theorem~\ref{theorem5.1} improves ~\cite[Theorem 2]{Neisendorfer3} in the sense that the $p^r$ power map of $\Omega P^{2n+1}(p^r)$ already goes trivially to the Anick space up to homotopy before looping.

\vspace{.5cm}\noindent {\it Acknowledgements.} The authors thank
F. Petrov for discussions related to the subject of the paper.

\end{document}